        \title[Algebraic $K$-theory over the infinite dihedral group]
              {Algebraic $K$-theory over the infinite dihedral group: a controlled topology approach}
       \author{James F. Davis}
       \address{Department of Mathematics\\
       Indiana University\\
       Bloomington, IN 47405, USA}
       \email{jfdavis@indiana.edu}
       \author{Frank Quinn}
       \address{Department of Mathematics\\
       Virginia Polytechnic Institute and State University\\
       Blacksburg, VA 24061, USA}
       \email{quinn@math.vt.edu}
       \author{Holger Reich}
      \address{Mathematisches Institut\\
               Freie Universit{\"a}t Berlin\\
               Arnimallee 7,
               D-14195 Berlin, Germany}
        \email{holger.reich@fu-berlin.de}
     \keywords{}
    \subjclass[2000]{}
\DeclareMathAlphabet{\matheurm}{U}{eur}{m}{n}
\newcommand{\op}{\text{op}}
\newcommand{\Ab}{\matheurm{Ab}}
\newcommand{\gcw}{\matheurm{GroupCW}}
\newcommand{\gcwp}{\matheurm{GroupCWPairs}}
\newcommand{\Groupoids}{{\matheurm{Groupoids}}}
\newcommand{\Or}{\matheurm{Or}}
\newcommand{\OrG}{\matheurm{Or}G}
\newcommand{\spaces}{\matheurm{Top}}
\newcommand{\Spectra}{\matheurm{Spectra}}
\DeclareMathOperator{\aut}{aut}
\DeclareMathOperator{\hocolim}{{hocolim}}
\DeclareMathOperator{\id}{{id}}
\DeclareMathOperator{\inc}{inc}
\DeclareMathOperator{\map}{map}
\DeclareMathOperator{\mor}{mor}
\DeclareMathOperator{\obj}{obj}
\DeclareMathOperator{\proj}{Proj}
\DeclareMathOperator{\pt}{{pt}}
\DeclareMathOperator{\res}{res}
\DeclareMathOperator*{\sma}{\wedge}  % \smash already defined
  \newcommand{\IN}{\mathbb{N}}
  \newcommand{\IR}{\mathbb{R}}
  \newcommand{\IZ}{\mathbb{Z}}
  \newcommand{\cala}{\mathcal{A}}
  \newcommand{\calc}{\mathcal{C}}
  \newcommand{\cald}{\mathcal{D}}
  \newcommand{\calg}{\mathcal{G}}
  \newcommand{\calh}{\mathcal{H}}
  \newcommand{\calu}{\mathcal{U}}
  \newcommand{\calv}{\mathcal{V}}
  \newcommand{\bfE}{{\mathbf E}}
  \newcommand{\bfH}{{\mathbf H}}
  \newcommand{\bfK}{{\mathbf K}}
\newcommand{\bfnil}{{\mathbf {Nil}}}
\newcommand{\R}{{\mathbb R}}
\newcommand{\Z}{{\mathbb Z}}
\DeclareMathOperator{\nil}{Nil}
\DeclareMathOperator{\sub}{\matheurm{sub}}
\DeclareMathOperator{\fin}{\matheurm{fin}}
\DeclareMathOperator{\ff}{\matheurm{f}}
\DeclareMathOperator{\gggg}{\matheurm{g}}
\DeclareMathOperator{\famg}{\matheurm{g}}
\DeclareMathOperator{\famh}{\matheurm{h}}
\DeclareMathOperator{\fbc}{\matheurm{fbc}}
\DeclareMathOperator{\cyc}{\matheurm{cyc}}
\DeclareMathOperator{\vc}{\matheurm{vcyc}}
\DeclareMathOperator{\all}{\matheurm{all}}
\DeclareMathOperator{\Nil}{Nil}
\DeclareMathOperator{\WNil}{Nil}
\newcommand{\g}{\Gamma}
\newcommand{\go}{\Gamma_0}
\theoremstyle{plain}
\newtheorem{theorem}{Theorem}[section]
\newtheorem{lemma}[theorem]{Lemma}
\newtheorem{corollary}[theorem]{Corollary}
\theoremstyle{definition}
\newtheorem{definition}[theorem]{Definition}
\newtheorem{example}[theorem]{Example}
\newtheorem{remark}[theorem]{Remark}
\theoremstyle{remark}
\let\c@equation=\c@theorem\makeatother
\begin{document}

\begin{abstract}
We use controlled topology applied to the action of the infinite dihedral group on a partially compactified plane and deduce two consequences for algebraic $K$-theory.  The first is that 
the family in the $K$-theoretic Farrell-Jones conjecture can be reduced to only those
virtually cyclic groups which admit a surjection with finite kernel onto a cyclic group.
The second is that the Waldhausen Nil groups for a group which maps epimorphically onto the infinite dihedral group can be computed in terms of the Farrell-Bass Nil groups of the index two subgroup which maps surjectively to the infinite cyclic group.
\end{abstract}

\maketitle

%\tableofcontents

\section{Introduction}

Let $G$ be a group.  Let $\OrG$ be its orbit category; objects are $G$-sets $G/H$ where $H$ is a subgroup of $G$ and morphisms are $G$-maps.  Let $R$ be a ring. Davis-L\"uck \cite{DL98} define a functor $\bfK_R : \Or G \to \Spectra$  with the key property  $\pi_n \bfK_R(G/H) = K_n(RH)$.  The utility of such a functor is to allow the definition of an equivariant homology theory, indeed for a $G$-CW-complex $X$, one defines
$$
H^G_n(X; \bfK_R) = \pi_n(\map_G(-,X)_+ \wedge_{\Or G} \bfK_R(-)),
$$
see \cite[section 4 and 7]{DL98} for basic properties.  Note that $\map_G(G/H,X) = X^H$ is the fixed point functor and that the ``coefficients'' of the homology theory are given by $H^G_n(G/H; \bfK_R) \cong K_n(RH)$.

A {\em family $\ff$ of subgroups of $G$} is a nonempty set of subgroups closed under subgroups and conjugation. For example,  the families
$$
1 \subset \fin \subset \fbc \subset \vc \subset \all
$$
consist of the trivial subgroup, the finite subgroups, the extensions
of finite by cyclic subgroups (groups that surject onto a cyclic group with finite kernel), the virtually cyclic subgroups (groups that have
a cyclic subgroup of finite index), and all
subgroups.  

For a family $\ff$, $E_{\ff}G$  is the classifying space for $G$-actions with isotropy in $\ff$.  
It is characterized up to $G$-homotopy equivalence as a $G$-CW-complex with $E_{\ff}G^H$  
contractible for subgroups $H \in \ff$ and $E_{\ff}G^H= \emptyset$ for subgroups $H \not \in \ff$.    
The Farrell-Jones isomorphism conjecture for algebraic $K$-theory \cite{FJ-isom} states that for every group $G$ and every ring $R$, the map
$$
H_n^G ( E_{\vc} G ; \bfK_{R}) \to H_n^G ( \pt ; \bfK_{R} ) =K_n( RG ).
$$
induced by the projection $E_{\vc} G \to \pt$ 
is an isomorphism.

Let $C_\infty$ be the infinite cyclic group,  $C_n$ the finite cyclic group of order $n$, and
$D_\infty =C_2 \ast C_2 \cong C_\infty \rtimes C_2$ the infinite dihedral group.  Virtually cyclic groups are either finite, surject to $C_\infty$ with finite kernel, or surject to $D_\infty$ with finite kernel, see \cite[Lemma~2.5]{FJ-lower} or \cite[Lemma~3.6]{DKR}.  The algebraic $K$-theory of groups surjecting to $C_\infty$ was partially analyzed by Farrell-Hsiang \cite{FH}; the algebraic $K$-theory of groups surjecting to $D_\infty$ was partially analyzed by Waldhausen \cite{Waldhausen(1978a), Waldhausen(1978b)}.  The trichotomy of virtually cyclic groups motivates a reexamination of the $K$-theory of groups surjecting to the infinite dihedral group.

For a group homomorphism $p \colon G \to H$
and a family $\ff$ of subgroups of $H$ define the pullback family by
\[
p^{\ast} \ff = \{ K \; | \; K \mbox{ is a subgroup of } G \mbox{ and } p(K) \in \ff \}.
\]

The following theorem is proved in Section~\ref{sec-proof-of-main}.

\begin{theorem} \label{thm-main}
Let $p \colon \g \to D_\infty$ be a surjective group homomorphism.
For every ring $R$ and every $n \in \IZ$ the map
\[
H_n^{\g} ( E_{p^{\ast} \fbc}\g ; \bfK_{R}) \to H_n^{\g} ( \pt ; \bfK_{R} ) = K_n( R\g )
\]
is an isomorphism.
\end{theorem}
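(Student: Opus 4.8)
The plan is to compare the $p^{\ast}\fbc$-assembly map with the $p^{\ast}\fin$-assembly map, to feed the difference into Waldhausen's analysis of the $K$-theory of amalgamated products, and to use controlled topology to identify the two relevant nil terms.

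\textbf{Reformulation.} Since every subgroup of $D_\infty$ is virtually cyclic, $E_{\vc}D_\infty$ is a point and $p^{\ast}\vc=\all$, so $E_{p^{\ast}\vc}\g\simeq\pt$ and the theorem is the assertion that the relative assembly map $H^{\g}_n(E_{p^{\ast}\fbc}\g;\bfK_R)\to H^{\g}_n(E_{p^{\ast}\vc}\g;\bfK_R)$ is an isomorphism. For a surjection $p$ and a family $\ff$ of subgroups of $D_\infty$ one has $E_{p^{\ast}\ff}\g\simeq p^{\ast}(E_{\ff}D_\infty)$, the space $E_{\ff}D_\infty$ with $\g$ acting through $p$; I record the models $E_{p^{\ast}\fin}\g\simeq p^{\ast}\IR$, where $\IR$ carries the standard $D_\infty$-action, and $E_{p^{\ast}\fbc}\g\simeq p^{\ast}\IR\ast(\g/\g_1)^{\ast\infty}$, the join of $p^{\ast}\IR$ with the infinite join of the two-point $\g$-set $\g/\g_1$, where $\g_1=p^{-1}(C_\infty)$ is the canonical index-two subgroup. (This uses that $\fbc$ is the union of $\fin$ with the family of subgroups of $C_\infty$, together with the fact that a $C_2$ acting freely on a contractible space forces infinite dimension.) The finite stage $p^{\ast}\IR\ast(\g/\g_1)\cong\IR^2$ with the two points of $\g/\g_1$ adjoined at the two ends of the axis — a ``partially compactified plane'' on which $\g$, hence $D_\infty$, acts — is the object to which controlled topology will be applied.

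\textbf{The $\fin$-assembly and Waldhausen's theorem.} By Bass--Serre theory $\g=A\ast_{\go}B$, where $\go=\ker p$ and $A$, $B$ are the preimages of two generating reflections, so that $p^{\ast}\IR$ is the Bass--Serre tree, with a $\g$-pushout whose cells are $\g/\go\times D^1$ and $\g/A\sqcup\g/B$. The resulting Mayer--Vietoris sequence for $H^{\g}_\ast(-;\bfK_R)$, compared with Waldhausen's Mayer--Vietoris sequence for the $K$-theory of the amalgam, shows that $H^{\g}_n(E_{p^{\ast}\fin}\g;\bfK_R)\to K_n(R\g)$ is split injective with cokernel the reduced Waldhausen Nil group $\widetilde{\Nil}^W_{n-1}$ of $A\ast_{\go}B$. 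Hence in the factorization $H^{\g}_n(E_{p^{\ast}\fin}\g)\to H^{\g}_n(E_{p^{\ast}\fbc}\g)\to K_n(R\g)$ the composite is split injective with cokernel $\widetilde{\Nil}^W_{n-1}$, and the theorem becomes equivalent to the statement that the first map is split injective and carries the relative term $H^{\g}_n(E_{p^{\ast}\fbc}\g,E_{p^{\ast}\fin}\g;\bfK_R)$ isomorphically onto $\widetilde{\Nil}^W_{n-1}$.

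\textbf{The controlled computation.} I would identify the relevant part of $K_n(R\g)$ with a $\g$-equivariant continuously controlled algebraic $K$-group over the partially compactified plane $p^{\ast}\IR\ast(\g/\g_1)$, controlled toward the two added points $\g/\g_1$, in the spirit of Pedersen--Quinn--Weibel and Carlsson--Pedersen; over the open stratum $p^{\ast}\IR$ this controlled theory reproduces $H^{\g}_\ast(E_{p^{\ast}\fin}\g;\bfK_R)$, and the germ at the added points is the cofiber. Because $p^{\ast}\IR$ is a line whose two ends are the points $\g/\g_1$ (interchanged by the reflections, fixed by the translations $\g_1$), translation toward the ends provides a $\g$-equivariant Eilenberg swindle collapsing the germ-at-the-ends theory onto a twisted Farrell--Bass nil term of the index-two subgroup $R\g_1$ (a twisted Laurent extension of $R\go$), with the $\g/\g_1=C_2$-twisting coming from a reflection inverting $C_\infty$. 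A comparison of this controlled $K$-theory with the equivariant homology theory $H^{\g}_\ast(-;\bfK_R)$ then identifies $H^{\g}_n(E_{p^{\ast}\fbc}\g,E_{p^{\ast}\fin}\g;\bfK_R)$ with that twisted Farrell nil group; finally the relation between the Waldhausen Nil of a $C_2\ast C_2$-type amalgam and the Farrell--Bass nil of the index-two infinite-cyclic-type subgroup — the second consequence advertised in the abstract, which emerges from the same picture — matches it with $\widetilde{\Nil}^W_{n-1}$, compatibly with the maps above.

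\textbf{Main difficulty.} The heart of the matter, and the reason controlled topology is the right tool, is the equivariant continuously controlled computation: one must run the swindle so that it respects the reflections interchanging the two ends of the axis, deal with the non-free isotropy of $D_\infty$ on $\IR$ (the reflection subgroups, which survive in the partial compactification), and verify that the germ at the added points is \emph{precisely} the nil obstruction to the $\fin$-assembly being an isomorphism — not merely an abstractly isomorphic group. Maintaining all of these identifications compatibly with Waldhausen's Mayer--Vietoris sequence, so that the isomorphism of nil groups is the one that makes the $\fbc$-assembly an isomorphism, is the delicate bookkeeping the controlled framework is designed to support.
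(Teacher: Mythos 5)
Your proposal takes a genuinely different route from the paper, and as written it has a gap in its central step.

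The paper's proof of Theorem~\ref{thm-main} does not go through Waldhausen's Mayer--Vietoris sequence or nil groups at all. It first reduces, via the inheritance results of Bartels--Reich, to the case $\g=D_\infty$ with coefficients (Theorem~\ref{thm-main-coeff-version}, a statement about the assembly map for $E_{\cyc}D_\infty$), and then \emph{directly verifies} the geometric hypotheses of the Bartels--L\"uck--Reich criterion from their hyperbolic groups paper: it takes $X=\IR$, $\overline X=[-\infty,\infty]$, and for each $\beta$ constructs an explicit open $\cyc$-cover of $D_\infty\times\overline\IR$ of dimension $2$. The only nontrivial work is exhibiting the cover; this is done by starting from a fixed cover $\calv'$ of the diagonal, together with the two sets $V_\pm$ near $\pm\infty$, and then applying the horizontal-stretching homeomorphism $\Phi_t(x,y)=(x-t(y-x),y)$, which fixes the diagonal, commutes with the $D_\infty$-action, and blows up the Lebesgue number by the factor $1+t$. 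No nil terms appear in this argument; they only enter later, in the proof of Theorem~\ref{cor1}, \emph{after} Theorem~\ref{thm-main} has been established.

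The gap in your proposal is the last step. After setting up the controlled picture at the two ends of the axis, you assert that the germ at those ends is a twisted Farrell--Bass nil term, and that ``the relation between the Waldhausen Nil of a $C_2\ast C_2$-type amalgam and the Farrell--Bass nil of the index-two infinite-cyclic-type subgroup \dots\ emerges from the same picture'' and matches the relative term with $\widetilde{\Nil}^W_{n-1}$. But that relation is precisely the paper's Theorem~\ref{cor1}, which the paper \emph{deduces from} Theorem~\ref{thm-main}, not the other way around; the whole proof of Theorem~\ref{cor1} in Section~\ref{sec-proof-of-cor1} begins by invoking Theorem~\ref{thm-main} in the second step of its chain of isomorphisms. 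Using the nil identity to establish Theorem~\ref{thm-main} would therefore require an independent proof of it (as in [DKR], which argues algebraically), and you do not supply one; neither the controlled swindle nor the $C_2$-equivariance argument you sketch is carried far enough to produce the required identification of the germ term with the Waldhausen nil, \emph{together with} the compatibility with the Waldhausen Mayer--Vietoris boundary map that you yourself flag as the main difficulty. (You also should note that the nil identification of the relative terms -- the paper's Lemma~\ref{dlnil} -- is itself nontrivial bookkeeping that occupies most of Section~\ref{sec-nil-rel}; it is used for the \emph{consequence}, not in the proof of the main theorem.) In short: the strategy you describe is closer to the content of Sections~\ref{sec-nil-rel} and \ref{sec-proof-of-cor1} than to Section~\ref{sec-proof-of-main}, and as a route to Theorem~\ref{thm-main} it currently has the logical dependency reversed.
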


\noindent
There is also a version of this theorem with coefficients, compare Remark~\ref{rem-coeff}.  Note that for the infinite dihedral group all finite-by-cyclic subgroups are cyclic, and that if $\ker p$ is finite, then $p^*\fbc = \fbc$.

The proof of Theorem~\ref{thm-main} in Section~\ref{sec-proof-of-main} below uses controlled topology and a geometric idea that is already implicit in
\cite{FJ-lower}. It is  a simple application of
\cite[Theorem~1.1]{Bartels-Lueck-Reich(hyperbolic)}.

Theorem \ref{thm-main} implies that the family of virtually cyclic subgroups can be replaced by the family of finite-by-cyclic subgroups in the Farrell-Jones conjecture and that the $K$-theory of a virtually cyclic group which surjects to $D_\infty$ can be computed in terms of the $K$-theory of its finite-by-cyclic subgroups.  More precisely, we sharpen the Farrell-Jones isomorphism conjecture (Corollary
\ref{cor-reduce-fj}) and compute certain Waldhausen Nil groups in terms of Farrell-Bass Nil groups (Theorem \ref{cor1}).  Both of these applications also follow from the main algebraic theorem of \cite{DKR}, but with different proofs.  Our Lemma \ref{dlnil} gives a translation between the controlled topology approach and the algebraic approach.

The transitivity principle (see \cite[Theorem A.10]{FJ-isom} or \cite[Theorem 65]{Lueck-Reich(survey)}) says that given families $\ff \subset \gggg$ of subgroups of $G$, if for all $H \in \gggg- \ff$, the assembly map
$$
H_n^H(E_{\ff \cap H}H; \bfK_R) \to H_n^H(\pt; \bfK_R)
$$
is an isomorphism, then the relative assembly map
$$
H_n^G(E_{\ff}G; \bfK_R) \to H_n^G(E_{\gggg}G; \bfK_R)
$$
is an isomorphism. Here $f \cap H = \{ K \; | \; K \in \ff, \; K \subset H \}$. As an immediate consequence of the transitivity principle and the classification trichotomy for virtually cyclic groups we obtain the following corollary of Theorem \ref{thm-main}.

\begin{corollary} \label{cor-reduce-fj}
For any group $G$ and ring $R$,
$$
H_n^G ( E_{\fbc} G ; \bfK_{R}) \to H_n^G (E_{\vc} G; \bfK_{R} )
$$
is an isomorphism.
\end{corollary}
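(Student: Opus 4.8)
The plan is to apply the transitivity principle stated above with $\ff = \fbc$ and $\gggg = \vc$. Since $\fbc \subset \vc$, it suffices to verify that for every $H \in \vc - \fbc$ the assembly map
\[
H_n^H(E_{\fbc \cap H}H; \bfK_R) \to H_n^H(\pt; \bfK_R)
\]
is an isomorphism for all $n$. First I would invoke the trichotomy for virtually cyclic groups (\cite[Lemma~2.5]{FJ-lower} or \cite[Lemma~3.6]{DKR}): a virtually cyclic group is finite, or admits a surjection with finite kernel onto $C_\infty$, or admits a surjection with finite kernel onto $D_\infty$, and these three alternatives are mutually exclusive. Groups of the first two types lie in $\fbc$, so any $H \in \vc - \fbc$ must admit a surjection $p \colon H \to D_\infty$ with $\ker p$ finite.

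Next I would identify the family $\fbc \cap H$ occurring in the transitivity principle with the pullback family $p^{\ast}\fbc$ of Theorem~\ref{thm-main}. Note that $\fbc \cap H$ is just the set of finite-by-cyclic subgroups of $H$, since $\fbc$ is closed under passage to subgroups and being finite-by-cyclic does not depend on the ambient group. For a subgroup $K \le H$, the image $p(K)$ is a subgroup of $D_\infty$, hence either finite or infinite virtually cyclic; since $\ker p \cap K$ is finite, $K$ is finite-by-cyclic exactly when $p(K)$ is, i.e.\ exactly when $p(K) \ne D_\infty$. (One direction uses that an extension of an infinite cyclic group by a finite group is again finite-by-cyclic; the other uses the mutual exclusivity in the trichotomy, applied to $K \twoheadrightarrow p(K)$.) This is precisely the observation recorded after Theorem~\ref{thm-main} that $p^{\ast}\fbc = \fbc$ when $\ker p$ is finite.

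Then I would apply Theorem~\ref{thm-main} to the surjection $p \colon H \to D_\infty$ (taking $\g = H$), which yields that
\[
H_n^H(E_{p^{\ast}\fbc}H; \bfK_R) \to H_n^H(\pt; \bfK_R) = K_n(RH)
\]
is an isomorphism for all $n$. Combining this with the identification $p^{\ast}\fbc = \fbc \cap H$ of the previous step verifies the hypothesis of the transitivity principle, whose conclusion is exactly the assertion that $H_n^G(E_{\fbc}G; \bfK_R) \to H_n^G(E_{\vc}G; \bfK_R)$ is an isomorphism.

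All the genuine content sits in Theorem~\ref{thm-main}, which is already established; the remaining steps are organizational. The point that most deserves care is the family identification $\fbc \cap H = p^{\ast}\fbc$ --- that ``finite-by-cyclic'' is faithfully detected after dividing out the finite kernel of $p$ --- and I expect this to be the only place where one must appeal in a nontrivial way to the structure of subgroups of $D_\infty$ and to the mutual exclusivity of the three classes of virtually cyclic groups.
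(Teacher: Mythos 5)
Your proposal is correct and is exactly the paper's proof: apply the transitivity principle with $\ff = \fbc$, $\gggg = \vc$, use the trichotomy of virtually cyclic groups to see that any $H \in \vc - \fbc$ surjects onto $D_\infty$ with finite kernel, identify $\fbc \cap H$ with $p^{\ast}\fbc$, and invoke Theorem~\ref{thm-main}. One tiny slip: your parenthetical claim that $K$ is finite-by-cyclic ``exactly when $p(K) \ne D_\infty$'' is not quite right (a proper infinite dihedral subgroup of $D_\infty$ is also not finite-by-cyclic); the correct and operative statement, which you in fact use, is that $K$ is finite-by-cyclic exactly when $p(K)$ is cyclic.
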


There is an analogous statement for the fibered Farrell-Jones conjecture and the Farrell-Jones conjecture with coefficients, compare Remark~\ref{rem-coeff}.
The corollary implies that the Farrell-Jones conjecture in $K$-theory is equivalent to the conjecture that
$$
H_n^G ( E_{\fbc} G ; \bfK_{R}) \to H_n^G (\pt; \bfK_{R} ) = K_n(RG)
$$
is an isomorphism.

\begin{example} \label{ex-join-model}
We can realize $D_\infty$ as the subgroup of homeomorphism of the real line $\IR$ generated
by $x \mapsto x + 1$ and $x \mapsto -x$. Furthermore $D_\infty$ acts via the quotient map
$D_\infty \to D_\infty/C_\infty = C_2$ on $S^{\infty} = EC_2$. The join
\[
S^{\infty} \ast \IR
\]
is a model for $E_{\fbc}D_\infty$. This join-model was pointed out by Ian Hambleton.
See Lemma~\ref{lem-join} and Remark~\ref{rem-join} for a conceptional
explanation of this fact.
\end{example}

In order to explain the consequences of Theorem~\ref{thm-main} for Nil-groups
we introduce some more notation that will be used throughout the whole paper.
As above let $p \colon \g \to D_\infty=C_2\ast C_2$ be a surjective group homomorphism.
Let $C_\infty$ be the maximal infinite cyclic subgroup of $D_\infty$, let $\go = p^{-1} ( C_\infty )$, and let $F = \ker p$. Hence we have the following commutative diagram of groups with exact rows and columns.
\[
\xymatrix{
&& 1 \ar[d]   \ar[d]   & 1 \ar[d] &\\
1 \ar[r] &  F \ar@{=}[d]  \ar[r] & \go  \ar[d]  \ar[r]^-p  & C_\infty \ar[d] \ar[r] & 1 \\
1 \ar[r] &  F  \ar[r] & \g \ar[r]^-p \ar[d] & D_\infty \ar[d] \ar[r] & 1 \\
&& C_2 \ar[d] \ar@{=}[r] & C_2 \ar[d]  & \\
&& 1 & 1 &
}
\]
Both the groups $\go$ and $\g$ admit descriptions in terms of combinatorial group theory.  Choose $t \in \go$ so that $p(t) \in C_\infty$ is a generator.  This chooses a splitting of the epimorphism $\go \to C_\infty$ and hence expresses $\go$ as a semidirect product $\go = F \rtimes C_\infty$.
Moreover, let $G_1 = p^{-1} ( C_2 \ast 1 )$, $G_2 = p^{-1} ( 1 \ast C_2 )$.
We have the following two pushout-diagrams of groups. The left one maps via $p$ surjectively onto the right one.
\begin{eqnarray} \label{eq-pushout}
\xymatrix{
F \ar[d] \ar[r] & G_2 \ar[d] & 1 \ar[r] \ar[d] & C_2 \ar[d] \\
G_1 \ar[r] & \g = G_1 \ast_{F} G_2  & C_2 \ar[r] & D_\infty =  C_2 \ast C_2.
         }
\end{eqnarray}

Next note that $tRF \subset R\go$, $R[G_1 - F] \subset R\g$, and $R[G_2 - F] \subset R\g$ are $RF$-bimodules.  Given a ring $S$ and $S$-bimodules $M$, $N$, there are Waldhausen Nil groups $\widetilde{\nil}_n(S; M,N)$ and Farrell-Bass Nil groups $\widetilde{\nil}_n(S; M)$.  The definitions are reviewed in Section~\ref{sec-nil-rel}.  Lemma \ref{dlnil}(ii) below allows a translation between results from controlled topology that are formulated in terms of assmbly maps and results about Nil groups.  This translation lemma was already used by Lafont-Ortiz \cite{Lafont-Ortiz}.   It is key in proving our second main result.

\begin{theorem} \label{cor1}
Let $p \colon \g \to D_\infty$ be a surjective group homomorphism and let the notation be as above.
For all $n \in \IZ$ there is an isomorphism
\begin{eqnarray*}
 \widetilde{\WNil}_n ( RF ; R[G_1 - F] , R[G_2 - F] ) \cong
\widetilde{\Nil}_n (RF ; tRF ) .
\end{eqnarray*}
\end{theorem}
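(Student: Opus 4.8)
The plan is to realise both sides of the claimed isomorphism as one and the same relative group of the equivariant homology theory $H^{?}_*(-;\bfK_R)$, and then to read off the comparison from the geometry of $D_\infty$ acting on a line. First, the Bass--Serre tree $T$ of the decomposition $\g=G_1\ast_F G_2$ is a model for $E_{p^{\ast}\fin}\g$: a subgroup $K\le\g$ fixes a point of $T$ exactly when it is subconjugate to $G_1$ or to $G_2$, which by the definition of $G_1$ and $G_2$ means precisely that $p(K)$ is a finite subgroup of $D_\infty$. Using the $\g$-CW structure of $T$, whose $0$-cells are $\g/G_1\sqcup\g/G_2$ and whose $1$-cells are $\g/F$, Waldhausen's computation of the algebraic $K$-theory of an amalgamated product --- repackaged as the translation Lemma~\ref{dlnil}(ii) --- identifies the relative group $H^{\g}_{n+1}(\pt,E_{p^{\ast}\fin}\g;\bfK_R)$ with $\widetilde{\WNil}_n(RF;R[G_1-F],R[G_2-F])$. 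Since by Theorem~\ref{thm-main} the map $H^{\g}_*(E_{p^{\ast}\fbc}\g;\bfK_R)\to H^{\g}_*(\pt;\bfK_R)$ is an isomorphism, we may replace $\pt$ by $E_{p^{\ast}\fbc}\g$ and conclude that
\[
\widetilde{\WNil}_n(RF;R[G_1-F],R[G_2-F])\;\cong\;H^{\g}_{n+1}\bigl(E_{p^{\ast}\fbc}\g,\,E_{p^{\ast}\fin}\g;\bfK_R\bigr).
\]

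Next I would use the join model $E_{p^{\ast}\fbc}\g\simeq p^{\ast}S^{\infty}\ast E_{p^{\ast}\fin}\g$ of Example~\ref{ex-join-model}, which exhibits $E_{p^{\ast}\fbc}\g$ as the $\g$-homotopy pushout of $p^{\ast}S^{\infty}\leftarrow p^{\ast}S^{\infty}\times E_{p^{\ast}\fin}\g\to E_{p^{\ast}\fin}\g$. Checking the fixed-point criterion one factor at a time, $p^{\ast}S^{\infty}$ is a model for $E_{\calf}\g$ with $\calf$ the family of all subgroups of $\go$, while the fibre product is a model for $E_{\calf'}\g$ with $\calf'$ the family of all subgroups of $F$ (both families are closed under conjugation because $\go$ and $F$ are normal in $\g$). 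Excision for this homotopy pushout then gives
\[
H^{\g}_{n+1}\bigl(E_{p^{\ast}\fbc}\g,\,E_{p^{\ast}\fin}\g;\bfK_R\bigr)\;\cong\;H^{\g}_{n+1}\bigl(E_{\calf}\g,\,E_{\calf'}\g;\bfK_R\bigr),
\]
and the pair on the right only sees isotropy contained in $\go$.

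Finally I would compute this last relative group by pushing the $\g$-action down to $\g/\go=C_2$. The space $E_{\calf}\g$ is modelled by $S^{\infty}$ with $\g$ acting through $\g\to\g/\go=C_2$, so its $\g$-CW structure has a single cell $\g/\go\times D^k$ in each dimension and $H^{\g}_*(E_{\calf}\g;\bfK_R)\cong\pi_*\bigl(\mathbf{K}(R\go)_{hC_2}\bigr)$, with $C_2$ acting by conjugation by a lift $t'$ of the reflection; likewise $H^{\g}_*(E_{\calf'}\g;\bfK_R)\cong\pi_*\bigl(\mathbf{K}(RF)_{hD_\infty}\bigr)\cong\pi_*\bigl((\mathbf{K}(RF)_{hC_\infty})_{hC_2}\bigr)$. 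The map between them is induced by the $C_2$-equivariant assembly map $\mathbf{K}(RF)_{hC_\infty}\to\mathbf{K}(R\go)$, and by the twisted Bass--Heller--Swan theorem --- the translation Lemma~\ref{dlnil}(i) --- this assembly map is split injective with homotopy cofibre $\Sigma\bfN_t\vee\Sigma\bfN_{t^{-1}}$, where $\pi_*\bfN_t=\widetilde{\Nil}_*(RF;tRF)$ and $\pi_*\bfN_{t^{-1}}=\widetilde{\Nil}_*(RF;t^{-1}RF)$. Conjugation by $t'$ carries $t$ to an element $ft^{-1}$ with $f\in F$, because the reflection inverts the translation in $D_\infty$; so by naturality of the Bass--Heller--Swan splitting the $C_2$-action interchanges the two wedge summands, whence the underlying $C_2$-spectrum is the induced one $(C_2)_+\wedge\Sigma\bfN_t$ and its homotopy orbits are $\Sigma\bfN_t$. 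Therefore $H^{\g}_{n+1}(E_{\calf}\g,E_{\calf'}\g;\bfK_R)\cong\pi_{n+1}(\Sigma\bfN_t)=\widetilde{\Nil}_n(RF;tRF)$, which combined with the two displayed isomorphisms above proves the theorem. (As a consistency check, $R[G_1-F]\otimes_{RF}R[G_2-F]\cong tRF$, since in $\g$ the product of the nontrivial coset of $F$ in $G_1$ with the nontrivial coset of $F$ in $G_2$ is the coset $tF\subseteq\go$.)

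The main obstacle is the folding step: one must verify that conjugation by a lift of the reflection genuinely interchanges the two Farrell--Bass Nil summands of $\mathbf{K}(R\go)$, so that precisely one copy of $\widetilde{\Nil}_*(RF;tRF)$ survives the passage to $C_2$-homotopy orbits --- geometrically, the reflection of the line $\IR$ swaps its two ends, and with them the Nil groups ``at $+\infty$'' and ``at $-\infty$''. Making this rigorous requires controlling the functoriality of the twisted Bass--Heller--Swan decomposition under the automorphism of $R\go$ induced by $t'$, and matching the resulting $C_2$-structure with the one coming from the join model. Everything else --- the translation Lemma~\ref{dlnil}, the join model of Example~\ref{ex-join-model}, and excision --- is formal once Theorem~\ref{thm-main} is in hand.
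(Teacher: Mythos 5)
Your proposal is correct and follows essentially the same route as the paper's \S4: identify the Waldhausen Nil group with the relative term $H^{\g}_{n+1}(E_{\all}\g,E_{p^{*}\fin}\g;\bfK_R)$ via Lemma~\ref{dlnil}(ii), replace $E_{\all}\g$ by $E_{p^{*}\fbc}\g$ via Theorem~\ref{thm-main}, excise using the join model of Lemma~\ref{lem-join}/Remark~\ref{rem-join} down to the pair $(E_{\sub(\go)}\g,E_{\sub(F)}\g)$, pass to $C_2=\g/\go$-homotopy orbits via Lemma~\ref{lem-sub}, and conclude once one knows the residual $C_2$-action swaps the two Farrell--Bass Nil summands. (Your excision step is the paper's manipulation with the $\Or\g$-spectrum $\bfK/\bfK_{p^{*}\fin}$ from Lemma~\ref{spectra}, packaged geometrically; these are interchangeable.) The place where you say ``making this rigorous requires controlling the functoriality of the twisted Bass--Heller--Swan decomposition'' is precisely the paper's Lemma~\ref{equivariant}, which occupies the bulk of \S3: one must construct a $\g$-action on the geometric pushout square for $E_{\sub(F)}\go$, push it through the groupoid machinery, match it with an explicit $C_2$-action on the categories $\nil(RF;t^{\pm1}RF)$, and check everything is compatible with the homotopy $\bfH$ making Waldhausen's square cartesian. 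You have correctly flagged this as the main obstacle, but the sketch you give (``naturality of the BHS splitting under $t\mapsto ft^{-1}$'') is not yet a proof; also note the paper is careful to construct only \emph{weak} homotopy $C_2$-actions and therefore argues through the Mayer--Vietoris sequences and a collapsing Atiyah--Hirzebruch spectral sequence rather than taking $C_2$-homotopy orbits of an honest $C_2$-equivariant cofiber sequence as you propose, so even after Lemma~\ref{equivariant} a little more bookkeeping is needed to land at $C_2$-coinvariants.
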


The Nil group on the left is the abelian group that measures failure of exactness of a Mayer-Vietoris type sequence for $K_*(R[G_1 \ast_{F} G_2 ])$ and two copies of the Nil group on the right  measures failure of exactness of a Wang type sequence for $K_*(R[F\rtimes C_\infty])$. One should think of the map from the right to the left in the above theorem as induced by the group inclusion $\go \subset \g$ and the map from left to right as induced by the two-fold transfer map associated to the inclusion.

\begin{remark} \label{rem-coeff}
In \cite{Bartels-Reich(coefficients)} the Farrell-Jones conjecture for $G$ with
coefficients in an additive category $\cala$ with $G$-action is developed.
This version with coefficients specializes to the Farrell-Jones conjecture
and also implies the fibered Farrell-Jones conjecture, i.e.;
the conjecture that for every surjective group homomorphism $p \colon \g \to G$ the map
\begin{eqnarray} \label{eq-fj-fibred}
H_n^{\g} ( E_{p^{\ast}\vc} \g ; \bfK_{R}) \to H_n^{\g} ( \pt ; \bfK_{R} ) =K_n( R\g )
\end{eqnarray}
is an isomorphism. Theorem~\ref{thm-main} holds also with coefficients, i.e; with
$\bfK_R$ replaced by $\bfK_{\cala}$ and $K_n ( R\g )$ replaced by $K_n ( \cala \ast_{\g} \pt )$.
Since the transitivity principle  is a fact about
equivariant homology theories, Corollary~\ref{cor-reduce-fj} also holds with coefficients
and therefore in the  fibered case, too.
\end{remark}

Corollary~\ref{cor-reduce-fj} and Theorem \ref{cor1} are given alternative proofs in \cite{DKR}.  
A difficult part of the story is to check that the approaches of this paper and of \cite{DKR} agree on their computational predictions.
Corollary~\ref{cor-reduce-fj} above is perhaps more natural from the point of view of this paper and Theorem \ref{cor1} is perhaps 
more natural from the point of view of \cite{DKR}.  \\

We would like to thank the referee for suggesting a simple   
proof of Lemma~\ref{spectra}. We would also like to thank the
National Science Foundation and the Instituto de Matematicas, Morelia
(IMUNAM) for their support and for funding the conference “Geometry,
Topology, and their Interactions” in 2007 in Morelia, Mexico where the
germination of this paper occurred and the Hausdorff Institute in
Bonn, Germany where it was finalized.

\section{Proof of Theorem~\ref{thm-main}} \label{sec-proof-of-main}

The version of Theorem~\ref{thm-main} with coefficients in an additive category, mentioned in Remark~\ref{rem-coeff}, follows
from Theorem \ref{thm-main-coeff-version} below by the inheritance properties proven in \cite[Corollary 4.3]{Bartels-Reich(coefficients)}.
Theorem~\ref{thm-main} itself then follows by specializing to the case where the additive category $\cala$ is
$R_{\oplus}$, i.e.; the category of finitely generated free $R$-modules equipped with the trivial $\Gamma$-action, compare \cite[Example 2.4]{Bartels-Reich(coefficients)}.

\begin{theorem} \label{thm-main-coeff-version}
Let $\cyc$ denote the family of cyclic subgroups of the infinite dihedral group $D_\infty$.
For every additive category $\cala$ with $D_\infty$-action and all $n \in \IZ$ the assembly map
\[
H_n^{D_\infty} ( E_{\cyc}D_\infty ; \bfK_{\cala}) \to H_n^{D_\infty} ( \pt ; \bfK_{\cala} ) =  K_n ( \cala \ast_{D_\infty} \pt )
\]
is an isomorphism.
\end{theorem}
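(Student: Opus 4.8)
The plan is to present $\di$ geometrically — as the isometry group of the line — feed that action into the controlled-topology criterion of Bartels--L\"uck--Reich, and then observe that the family of isotropy groups the machinery produces is exactly $\cyc$, because the \emph{dihedral} subgroups of $\di$ are not visible at infinity. Concretely, realize $\di$ as the group of isometries of $\IR$ generated by $x \mapsto x+1$ and $x \mapsto -x$, as in Example~\ref{ex-join-model}. This action is proper, cocompact and by isometries, and $\IR$ is a CAT(0)-space of covering dimension one; its two ends give a $\di$-equivariant compactification $\overline{\IR} \cong [-\infty,+\infty]$ in which $\partial\overline{\IR} = \{-\infty,+\infty\}$ is a $Z$-set acted on by $\di$ through $\di \to \di/C_\infty = C_2$, and the interior $\IR$ is a cocompact model for $E_{\fin}\di$.

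Next I would invoke \cite[Theorem~1.1]{Bartels-Lueck-Reich(hyperbolic)}, in its version with coefficients in an additive category (cf.\ \cite{Bartels-Reich(coefficients)}): for a group acting this way it reduces the assembly isomorphism with respect to a family $\mathcal{F}$ to the construction, at every scale, of a $\di$-invariant open cover of the flow space of $\overline{\IR}$ which is long in the flow direction, has uniformly bounded covering dimension, and has all member stabilizers in $\mathcal{F}$. Here the example is small enough that this is elementary. The flow space of $\overline{\IR}$ is again at most two-dimensional, and the only generalized geodesics in it with infinite stabilizer are the two that sit constantly at the ends $\pm\infty$, each stabilized by the translation subgroup $C_\infty = \langle x \mapsto x+1\rangle$. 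Small $C_\infty$-invariant neighborhoods of these two flow lines, together with their reflections, contribute members with $C_\infty$-stabilizer; the remaining bounded part of the flow space is covered — at any prescribed scale and in dimension $\le 2$ — by bounded pieces, which, being bounded, cannot be preserved by any infinite subgroup and so have stabilizer contained in a reflection subgroup $C_2$. Choosing the pattern of bounded pieces generically, no cover member is preserved by a dihedral subgroup of $\di$.

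Finally I would read off the family: the subgroups of $\di$ occurring as stabilizers of cover members are the finite subgroups together with subgroups of the end-stabilizers, i.e.\ $\fin$ together with the subgroups of $C_\infty$, which is exactly $\cyc$; and the boundary contributions show that $\fin$ alone would not suffice, consistently with the non-vanishing of Waldhausen Nil terms. Conversely, a dihedral subgroup $D' \subset \di$ acts on $\IR$ without a fixed point and interchanges the two ends, hence fixes no point of $\overline{\IR}$ and cannot arise — this is the geometric observation borrowed from \cite{FJ-lower}. Thus $\mathcal{F} = \cyc$, $E_{\mathcal{F}}\di = E_{\cyc}\di$, and \cite[Theorem~1.1]{Bartels-Lueck-Reich(hyperbolic)} gives that $H_n^{\di}(E_{\cyc}\di;\bfK_{\cala}) \to H_n^{\di}(\pt;\bfK_{\cala})$ is an isomorphism for all $n$; this is Theorem~\ref{thm-main-coeff-version}, and Theorem~\ref{thm-main} then follows as explained at the start of this section.

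The main obstacle is matching hypotheses rather than finding an idea: one must line up the precise tameness and $Z$-set requirements that \cite{Bartels-Lueck-Reich(hyperbolic)} imposes on the compactification $\overline{\IR}$ and the exact form of its long-thin-cover condition on the flow space, and then check carefully that the cover members can be kept with stabilizers inside $\cyc$ — in particular, that the infinite dihedral subgroups of $\di$ never show up because they move both ends. Everything past that is formal.
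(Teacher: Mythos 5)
You take the same road as the paper: realize $D_\infty$ as $\langle x\mapsto x+1,\; x\mapsto -x\rangle\subset\mathrm{Isom}(\IR)$, compactify $\IR$ to $\overline{\IR}=[-\infty,\infty]$, and appeal to \cite[Theorem~1.1]{Bartels-Lueck-Reich(hyperbolic)}. You also correctly identify the decisive geometric observation: an infinite dihedral subgroup of $D_\infty$ swaps the two ends and hence fixes no point of $\overline{\IR}$, so it cannot arise as a stabilizer of a cover member. That is indeed why the family you read off is $\cyc$ rather than $\vc$.

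Where the proposal leaves a real gap is in condition (D) itself, which you dismiss as formal. The hypothesis you need is not a cover ``at any prescribed scale'' in isolation, but a family of open $\cyc$-covers $\calu(\beta)$ of $D_\infty\times\overline{\IR}$, one for each $\beta\geq 1$, each containing $B_\beta(g)\times\{x\}$ for every point, \emph{with nerve dimension bounded uniformly in $\beta$}. Your phrase ``covered — at any prescribed scale and in dimension $\le 2$ — by bounded pieces, chosen generically'' asserts precisely what has to be proved and is false if taken at face value: if near the diagonal of $\IR\times\IR$ you simply inflate the $C_\infty$-translates of a ball to radius $\sim\beta$ so that they are $\beta$-wide, each point lies in $\sim\beta$ of them and the nerve dimension blows up. The paper's actual device is the $D_\infty$-equivariant self-map $\Phi_t(x,y)=(x-t(y-x),y)$ of $\IR\times\IR$: it fixes the diagonal, preserves horizontal lines, and stretches horizontal distances by a factor $1+t$, so the $\Phi_t$-image of one fixed $2$-dimensional $\cyc$-cover $\calv'$ of a tube around the diagonal gives $\beta$-wide covers of unchanged nerve dimension for all $\beta$; the two ends are absorbed by the $C_\infty$-invariant half-planes $V_\pm$, which are swapped by the reflection and meet the diagonal cover in at most two sets. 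This stretching construction is the technical heart of the proof and is not recoverable from the generic-position remark; spelling it out is what your proposal is missing. (A smaller quibble: the formulation of BLR used in the paper is phrased on $D_\infty\times\overline{X}$ with the word metric, not on a flow space; what you write can be translated, but as it stands it does not match the hypotheses you are invoking.)
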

Note that every subgroup of $D_{\infty}$  is either cyclic or infinite dihedral.
\begin{proof}
The proof will rely on Theorem~1.1 of \cite{Bartels-Lueck-Reich(hyperbolic)}.
According to that theorem the assembly map above is an isomorphism if we can show the following.
\begin{enumerate}
\item[(A)] \label{A}
There exists a $D_\infty$-space $X$ such that the underlying space $X$ is the realization of an abstract simplicial complex.
\item[(B)] \label{B}
There exists a $D_\infty$-space $\overline{X}$, which contains $X$ as an open $D_\infty$-subspace such that the underlying space
of $\overline{X}$ is compact, metrizable and contractible.
\item[(C)] \label{C}
There exists a homotopy $H \colon \overline{X} \times [0,1] \to \overline{X}$, such that $H_0 = \id_{\overline{X}}$
and $H_t(\overline{X}) \subset X$ for every $t >0$.
\item[(D)] \label{D}
There exists an $N \in \IN$ such that for all $\beta \geq 1$ there exists an open $\cyc$-cover $\calu ( \beta )$
of $D_\infty \times \overline{X}$ equipped with the diagonal $D_\infty$-action satisfying the following properties.
\begin{enumerate}
\item
For every $(g,x) \in D_\infty \times \overline{X}$ there exists a $U \in \calu( \beta )$, such that
\begin{eqnarray*} \label{eq-conditionDa}
B_{\beta}( g , d_w) \times \{ x \} \subset U.
\end{eqnarray*}
Here $B_{\beta} ( g , d_w ) \subset D_\infty$ denotes the ball of radius $\beta$ around $g \in D_\infty$ with respect to some fixed choice
of a word metric $d_w$ on $D_\infty$.
\item
The dimension of the nerve of the covering  $\calu( \beta )$
is smaller than $N$.
\end{enumerate}
\end{enumerate}
We recall that by definition an open $\cyc$-cover $\calu$ of a $\g$-space $Y$ is a collection of open
subsets of $Y$, such that the following conditions are satisfied.
\begin{enumerate}
\item $\bigcup_{U \in \calu} U = Y$.
\item If $g \in D_\infty$ and $U \in \calu$ then $g(U) \in \calu$.
\item If $g \in D_\infty$ and $U \in \calu$, then either $g(U) \cap U = \emptyset$ or $g ( U ) = U$.
\item For every $U \in \calu$ the group $\{ g \in D_\infty \; | \; g(U) = U \}$ lies in $\cyc$, i.e.; is a cyclic group.
\end{enumerate}

We realize the infinite dihedral group $D_\infty$ as the subgroup of the group of
homeo\-morphisms of the real line $\IR$ generated by
$s \colon x \mapsto x+1$ and $\tau \colon x \mapsto -x$. The action of $D_\infty$ on $\IR$ extends to an action
on $\overline{\IR} = [- \infty, \infty]$ by $s(\pm \infty) = \pm \infty$ and $\tau(\pm \infty) = \mp \infty$.

Clearly $X= \IR$ and $\overline{X} = \overline{\IR}$ satisfy the conditions (A),
(B) and (C). It hence suffices to find $\cyc$-covers of $D_\infty \times \overline{\IR}$ as required in (D).

We choose a point $x_0 \in \IR$, which lies in a free orbit of the $D_\infty$-action,
for example $x_0 = \frac{1}{4}$. We have the $D_\infty$-equivariant injective maps
\begin{eqnarray}
i \colon D_\infty \to \IR, \quad  g \mapsto gx_0 \quad \mbox{ and } \quad
j = i \times \id_{\overline{\IR}} \colon D_\infty \times \overline{\IR} \to \IR \times \overline{\IR} .
\end{eqnarray}
Let $d_w$ be the word metric on $D_\infty$
corresponding to the
generating system $\{ s^{\pm} , \tau \}$ and let $d$ denote the standard euclidean metric on $\IR$.
We have
$d( i(g) , i(g') ) \leq d_w( g , g') $
and hence
\[
i ( B_{\beta} (g, d_w) ) \subset B_{\beta} ( i(g) , d ).
\]
Therefore if $\beta \geq 1$ and $\calv( \beta)$ is a
$\cyc$-cover of $\IR \times \overline{\IR}$ such that
for every $(x,y) \in \IR \times \overline{\IR}$ there exists a $V \in \calv(\beta)$ such that
\begin{eqnarray} \label{condrr}
( x- \beta , x + \beta) \times \{ y \} \subset V,
\end{eqnarray}
then $\calu( \beta) = j^{-1} \calv(\beta) = \{ j^{-1} ( V ) \; | \; V \in \calu(\beta) \}$ is a $\cyc$-cover of $D_\infty \times \overline{\IR}$
whose dimension is not bigger than the one of $\calv(\beta)$ and it satisfies the condition (D)(a).

Two of the open sets in the desired $\cyc$-cover are given by
$V_+ = \{ (x,y) \in \IR \times \IR \; | \; x < y \} \cup \IR \times \{+ \infty \}$ and
$V_- = \{ (x,y) \in \IR \times \IR \; | \; y < x \} \cup \IR \times \{ - \infty \}$. Note that $\tau$ interchanges
$V_+$ and $V_-$ and that for the shift $s$ we have $s(V_+)=V_+$ and $s(V_-) = V_-$.

It remains to find a
$\cyc$-cover of a suitable neighbourhood (depending on $\beta$) of the diagonal $\Delta \subset \IR \times \IR \subset \IR \times \overline{\IR}$.
In a preparatory step we define a fixed $\cyc$-cover $\calv'$ of the diagonal $\Delta$ which is independent of $\beta$. Namely set
$z_0 = (0,0)$, $z_1= ( \frac{1}{2}, \frac{1}{2} )$ and define open balls in $\IR \times \IR \subset \IR \times \overline{\IR}$ by
\[
V_0 = \{ z \in \IR \times \IR \; | \; d(z_0 , z) < \frac{1}{2} \}, \quad
V_1= \{ z \in \IR \times \IR \; | \; d( z_1 , z ) < \frac{1}{2} \}.
\]

%\begin{center}
%\includegraphics[scale=0.2]{dqr_scan.jpg}
%\end{center}

\begin{center} 
\begin{tikzpicture}[scale=.5]  [>=angle 90] 
\draw[->] (-2,-3) -- (-2,4);
\draw[->] (-6,0) -- (2,0);
\draw (0,.2) -- (0,-.2) node[below] {$1$};
\draw (-1.8,2) -- (-2.2,2) node[left] {$1$};
\draw[pattern=horizontal lines, densely dotted] (-1,1) circle (1cm);
\draw[pattern=horizontal lines, densely dotted] (-3,-1) circle (1cm);
\filldraw[fill=gray , opacity=.5,densely dotted] (-2,0) circle (1cm);
\filldraw[fill=gray , opacity=.5,densely dotted] (0,2) circle (1cm);
\draw (-2,0) node {$V_0$};
\draw (-1,1) node {$V_1$};
%\draw (.1,2) node {$sV_0$};
%\draw (-3,-1) node {$s^{-1}V_1$};

\draw (6,1) node {$V_+$};
\draw (8,-1) node {$V_-$};
\draw (5,3) -- (9,3);
\draw (5,-3) -- (9,-3);
\draw [densely dashed, decorate, decoration=zigzag] (5,3) -- (5,-3);
\draw[densely dashed, decorate, decoration=zigzag] (9,3) -- (9,-3);
\draw[->] (7,-2) -- (7,2);
\draw[->] (5,0) -- (9,0);
\fill[gray, opacity=.1] (5,-2) -- (9, 2) -- (9,3) -- (5,3) -- (5,-2) --cycle;
\fill[gray, opacity=.4] (5,-2) -- (9, 2) -- (9,-3) -- (5,-3) -- (5,-2) --cycle;
\end{tikzpicture}
\end{center}
\medskip

Then
\[
\calv' = \{ gV_0 \; | \; g \in C_{\infty} \} \cup \{ gV_1 \; | \; g \in C_{\infty} \}
\]
is a $\cyc$-cover (in fact even a $\fin$-cover) of the diagonal $\Delta$. Now
\[
\calv =  \{ V_+ , V_- \} \cup \calv'
\]
is a $\cyc$-cover of $\IR \times \overline{\IR}$ of dimension $2$. 

For all points $(x,y)$ with $y \in { \pm \infty}$ and arbitrary $\beta >0$
condition \eqref{condrr} is satisfied by choosing $V$ to be $V_+$ or $V_-$. There clearly exists an $\epsilon >0$, such that
for every $(x,y) \in \IR \times \IR$ there exists $V \in \calv$ such that
\begin{eqnarray} \label{eq-epsilon-wide}
(x - \epsilon, x + \epsilon) \times \{ y \}  \subset V.
\end{eqnarray}

For every $t \geq 0$ consider the homeomorphism
\[
\Phi_t \colon \IR \times \IR \to \IR \times \IR, \quad (x,y) \mapsto (x - t ( y-x) , y) .
\]
It is $D_\infty$-invariant with respect to the diagonal action,
preserves the horizontal lines $\IR \times \{ y \}$ and distances on such a line get stretched by a
factor of $1+t$, i.e.; if $d$ is the euclidean distance, then
\begin{eqnarray} \label{hor-stretch}
d (\Phi_t  (x,y) , \Phi_t(x' , y)) = (1+t) d(x,x').
\end{eqnarray}
The diagonal $\Delta \subset \IR \times \IR$ is fixed under $\Phi_t$.

For a given $\beta \geq 1$ choose $t = t(\beta) \geq 0$ such that $(1+t) \epsilon \geq \beta$ and set
\[
\calv ( \beta ) = \{ V_+ , V_- \} \cup \{  \Phi_{t(\beta)}( V )  \; | \; V \in \calv' \}.
\]
This is a $\cyc$-cover of $\IR \times \overline{\IR}$ of dimension $2$, which satisfies condition \eqref{condrr} as can be seen by combining \eqref{eq-epsilon-wide} and \eqref{hor-stretch}.
\end{proof}

\section{Nil-groups as relative homology groups} \label{sec-nil-rel}

The purpose of this section is to identify Nil-groups with relative homology
groups.  Lemma~\ref{dlnil} is folklore, but a proof has not explicitly appeared in the literature.  It is an interpretation of Waldhausen's  Theorem 1 and 3 from \cite{Waldhausen(1978a)} in the case of group rings.

We briefly explain the notation we use for the Nil-groups.  Let $M$ and $N$ be bimodules over a ring $S$.  Define an exact category
$\nil(S; M, N)$ whose objects are quadruples $(P,Q,p,q)$ where $P$ and $Q$ are finitely generated projective left $S$-modules and $p : P \to M \otimes_S Q$ and $q : Q \to N \otimes_S P$ are $S$-module maps subject to the nilpotence condition that
$$
P \xrightarrow{p} M \otimes_S Q \xrightarrow{1 \otimes q} M \otimes_S N  \otimes_S P  \xrightarrow{1 \otimes 1 \otimes p} M \otimes_S N  \otimes_S M \otimes_S Q \to \cdots
$$
is eventually zero.  Define $\nil_n(S;M,N) = \pi_n(\Omega BQ \nil(S;M,N))$ following Quillen \cite{Q} when $n \geq 0$ and define  $\nil_n(S;M,N)$ following Schlichting \cite{Sch-exact} when $n < 0$.  There is a split surjection of exact categories
\begin{align*}
\nil(S;M,N) &\to \proj(S) \times \proj(S)\\
(P,Q,p,q) & \mapsto (P,Q).\\
\end{align*}
Define groups $\widetilde{\nil}_*$ using this split surjection so that
$$
\nil_n(S;M,N) =\widetilde \nil_n(S;M,N) \oplus K_nS \oplus K_nS.
$$

There are similar definitions in the one-sided case which we now review.   Let $M$ be a bimodule over a ring $S$.  Define an exact category $\nil(S; M)$ whose objects are pairs $(P,p)$ where $P$ is finitely generated projective left $S$-module and $p : P \to M \otimes_S P$ is an $S$-module map subject to the nilpotence condition that
$$
P \xrightarrow{p} M \otimes_S P \xrightarrow{1 \otimes p} M \otimes_S M  \otimes_S P  \xrightarrow{1 \otimes 1 \otimes p} M \otimes_S M  \otimes_S M \otimes_S P \to \cdots
$$
is eventually zero.  Define $\nil_n(S;M) = \pi_n(\Omega BQ \nil(S;M))$ following Quillen \cite{Q} when $n \geq 0$ and define  $\nil_n(S;M)$ following Schlichting \cite{Sch-exact} when $n < 0$.  There is a split surjection of exact categories
\begin{align*}
\nil(S;N) &\to \proj(S) \\
(P,p) & \mapsto P.\\
\end{align*}
Define groups $\widetilde{\nil}_*$ so that
$$
\nil_n(S;M) =\widetilde \nil_n(S;M) \oplus K_nS.
$$

We keep the notation from the introduction: $R$ is a ring,  $\go$ is a group sitting in a short exact
sequence
\[
1 \to F \to F \rtimes C_\infty = \go \xrightarrow{p} C_\infty \to 1
\]
and $\g= G_1 \ast_{F} G_2$ is a group sitting in a pushout diagram with injective maps
\[
\xymatrix{
F \ar[d] \ar[r] & G_2 \ar[d] \\
G_1 \ar[r] & G_1 \ast_{F} G_2 = \g.
         }
\]
We also choose $t \in \go$ so that $p(t) \in C_\infty$ is a generator.

However, in contrast  to the introduction, in the first half of this section
we do \emph{not} assume that $\go$ is a subgroup of $\g$ and we do \emph{not} assume that
$\g$ surjects onto $D_\infty$ or that  $F$ is of index $2$ in $G_1$ and $G_2$.

\begin{lemma} \label{dlnil}
Let $\ff$ be the smallest family of subgroups of $\g$ containing $G_1$ and $G_2$ and
let $\ff_0$ be the smallest family of subgroups of $\go$ containing $F$.
\begin{enumerate}
\item  \label{dlnil1}
The following exact sequences are split, and hence short exact.
\begin{align*}
H^{\g}_n(E_{\ff}\g ;\bfK_R) \to &H^{\g}_n(E_{\all}\g ;\bfK_R) \to H^{\g}_n(E_{\all}\g ,E_{\ff}\g ;\bfK_R)\\
H^{\go}_n(E_{\ff_0} \go ;\bfK_R) \to &H^{\go}_n(E_{\all}\go ;\bfK_R) \to H^{\go}_n(E_{\all}\go ,E_{\ff_0} \go ;\bfK_R)
\end{align*}
\item \label{dlnil2}
The relative terms can be expressed in terms of Nil groups; i.e., there are isomorphisms
\begin{align*}
H^{\g}_n(E_{\all} \g ,E_{\ff}\g ;\bfK_R) & \cong \widetilde{\WNil}_{n-1}(RF; R[G_1 - F], R[G_2 - F]) \\
H^{\go}_n(E_{\all} \go ,E_{\ff_0} \go ;\bfK_R) & \cong \widetilde{\Nil}_{n-1}(RF; tRF) \oplus  \widetilde{\Nil}_{n-1}(RF;t^{-1}RF)
\end{align*}

\end{enumerate}
\end{lemma}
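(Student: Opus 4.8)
The plan is to realize the two equivariant homology groups by means of the Bass--Serre tree of the relevant splitting of the group, and then to match the resulting long exact sequences with Waldhausen's Mayer--Vietoris and Wang sequences, whose failure of exactness is by definition measured by the Nil groups. First I would set up the models. The Bass--Serre tree $T$ of the amalgam $\g = G_1 \ast_F G_2$ is a $1$-dimensional $\g$-CW-complex with $0$-skeleton $T^{(0)} = \g/G_1 \sqcup \g/G_2$ and a single orbit $\g/F \times [0,1]$ of $1$-cells; for a subgroup $H \le \g$ the fixed set $T^H$ is a non-empty subtree, hence contractible, exactly when $H$ fixes a vertex, i.e.\ exactly when $H$ is subconjugate to $G_1$ or to $G_2$, i.e.\ exactly when $H \in \ff$, so $T$ is a model for $E_\ff \g$. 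Similarly $\IR$, on which $\go$ acts through $p \colon \go \to C_\infty$ by integer translations, is a $1$-dimensional $\go$-CW-complex with one orbit $\go/F$ of vertices and one orbit $\go/F \times [0,1]$ of edges, the two attaching maps inducing the identity and the map $R_t \colon gF \mapsto gtF$; here $\IR^H \ne \emptyset$ exactly when $H \le F$, i.e.\ exactly when $H \in \ff_0$, so $\IR$ is a model for $E_{\ff_0}\go$. Finally $E_\all\g = \pt$, whence $H^\g_n(E_\all\g;\bfK_R) = K_n(R\g)$, and likewise for $\go$.

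Next I would read the long exact sequences off these cellular structures. The theory $H^\g_*(-;\bfK_R)$ satisfies $H^\g_n(\g/H;\bfK_R) = K_n(RH)$, and since the pair $(T, T^{(0)})$ has relative cells only in dimension one, all of orbit type $\g/F$, its relative homology is $H^\g_n(T,T^{(0)};\bfK_R) \cong K_{n-1}(RF)$; the long exact sequence of the pair then reads
\[
\cdots \to K_n(RF) \to K_n(RG_1)\oplus K_n(RG_2) \to H^\g_n(E_\ff\g;\bfK_R) \to K_{n-1}(RF) \to \cdots,
\]
where the first map is the difference of the maps induced by the two inclusions $F \hookrightarrow G_i$ and the maps adjacent to $H^\g_*(E_\ff\g;\bfK_R)$ are induced by the inclusions $G_i \hookrightarrow \g$ together with the structure maps of the theory. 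The same argument applied to $(\IR,\IR^{(0)})$, in which $R_t$ induces the conjugation map $t_*$ on $K_*(RF)$, gives the Wang sequence
\[
\cdots \to K_n(RF) \xrightarrow{1-t_*} K_n(RF) \to H^\go_n(E_{\ff_0}\go;\bfK_R) \to K_{n-1}(RF) \xrightarrow{1-t_*} \cdots.
\]
Combining these with the long exact sequences of the pairs $(E_\all\g, E_\ff\g)$ and $(E_\all\go, E_{\ff_0}\go)$, assertions (i) and (ii) reduce to the single statement that the assembly maps $H^\g_n(E_\ff\g;\bfK_R) \to K_n(R\g)$ and $H^\go_n(E_{\ff_0}\go;\bfK_R) \to K_n(R\go)$ are split injective, with cokernels the Nil groups appearing in (ii).

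That statement is precisely Waldhausen's. Under the identifications just made, these assembly maps are Waldhausen's comparison maps from the $K$-theory of a graph of groups, respectively of an HNN extension, to the $K$-theory of the fundamental group, and the maps from $K_*(RG_i)$ and $K_*(RF)$ are those induced by the subgroup inclusions. The freeness (``purity'') hypotheses hold, since $RG_i = RF \oplus R[G_i-F]$ as $RF$-bimodules with $R[G_i-F]$ free on each side, and $R\go = \bigoplus_{k\in\IZ} t^k RF$; Theorems~1 and~3 of \cite{Waldhausen(1978a)} then assert that these maps are split injective with cokernels $\widetilde{\WNil}_{n-1}(RF;R[G_1-F],R[G_2-F])$ and $\widetilde{\Nil}_{n-1}(RF;tRF)\oplus\widetilde{\Nil}_{n-1}(RF;t^{-1}RF)$, respectively. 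Feeding this back into the long exact sequences of the pairs gives that the three-term sequences of (i) are split short exact and that the relative terms are exactly the Nil groups of (ii). The argument runs in all degrees $n \in \IZ$ because $\bfK_R$ is a non-connective spectrum-valued functor and Schlichting's negative-degree Nil groups are set up to be compatible with the Bass delooping used to extend Waldhausen's statement below degree zero.

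The step I expect to be the genuine obstacle --- and presumably the point at which ``folklore'' has to be upgraded to a proof --- is the comparison in the previous paragraph: verifying carefully that the Davis--L\"uck assembly map, defined abstractly via the orbit category and a homotopy colimit over the tree, really does coincide with Waldhausen's explicitly constructed comparison map, compatibly with the maps on $K$-theory induced by the inclusions of the vertex and edge subgroups. Morally both are ``the $K$-theory of the homotopy colimit along the tree of the vertex- and edge-group rings, mapped to $K(R\g)$'', so they must agree; but making this precise at the level of spectra, and reconciling Waldhausen's classical connective formulation with the non-connective, all-degrees version used here and with Schlichting's negative Nil groups, is where the work actually lies. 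The remaining ingredients --- the cellular long exact sequences, the excision identifications, and the five-lemma comparisons --- are routine.
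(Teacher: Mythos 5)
Your proposal follows essentially the same route as the paper: use the Bass--Serre tree and the real line as models for $E_\ff\g$ and $E_{\ff_0}\go$, read Mayer--Vietoris and Wang-type exact sequences off the cellular structure, and compare with Waldhausen's splitting theorems so that the cokernels of the assembly maps are identified with the reduced Nil groups. You have also correctly diagnosed where the ``genuine obstacle'' lies --- the comparison between the Davis--L\"uck assembly map (defined via the orbit category) and Waldhausen's explicitly constructed map out of $K$-theory of the amalgam.

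That comparison, however, is not a side issue you can defer: it is essentially all of the paper's proof. The paper carries it out at the level of spectra rather than homotopy groups, because the Nil term sits in the upper-left corner of a homotopy cartesian square that is only commutative up to a nontrivial homotopy $\bfH$; one therefore needs to build maps from the squares obtained by applying $\bfK_R$ to the pushouts in \eqref{geomsquares} to the (Schlichting-delooped) Waldhausen squares \eqref{twosidedsquare} and \eqref{onesidedsquare}, check that these maps are equivalences on the $\bfK(RG_1)\vee\bfK(RG_2)$ (resp.\ $\bfK(RF)$) and $D^1_+\wedge\bfK(RF)$ corners, and verify compatibility with $\bfH$ (which involves chasing natural transformations of groupoid-level and module-category-level functors such as $L_t$, $\overline{c_s}$, $\overline{R_t}$). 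Only then do the Mayer--Vietoris sequences match and the five-lemma/diagram-chase argument kick in. In short, your sketch is a correct reading of what has to be proved, and the ``routine'' ingredients you list are indeed routine, but the step you flag as the likely obstacle is where the entire content of the lemma resides; as written, your proposal records the statement of the needed comparison without supplying it.
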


The family $\ff_0$ is simply the  family $\sub(F)$ of all subgroups of $\go$ that are contained in $F$.  The family $\ff$ consists of
all subgroups of all conjugates of $G_1$ or $G_2$. Note that in the special case
considered in the introduction, where $p \colon \g \to D_\infty$ is a surjection onto the infinite dihedral group we have
\[
\ff = p^{\ast} \fin.
\]

Before we embark on the proof of Lemma~\ref{dlnil}, we review some facts about homotopy cartesian diagrams.  A square 
\[
\xymatrix{W \ar[d]^g \ar[r]^f & X \ar[d]^k \\
Y \ar[r]^\ell & Z}
\]
is {\em homotopy cartesian} if it is commutative and if the map $W \to X \times_Z Z^I \times_Z Y$ given by $f$ in the first factor, 
by the constant path $(x,t) \mapsto k(f(x))$ in the second factor, and by $g$ in the third, is a weak homotopy equivalence.  
A (possibly noncommutative) square is 
{\em homotopy cartesian with respect to a homotopy $H: W \times I \to Z$ from $k \circ f $ to $\ell \circ g$} 
if the map $W \to X \times_Z Z^I \times_Z Y$ given by $f$ in the first factor, by $H$ in the second, 
and by $g$ in the third, is a weak homotopy equivalence.  In that case the diagram 
\begin{equation}\label{hcartwrt}
\xymatrix{
W  \ar[rd] \ar[rrd]  \ar[rdd] & & \\
 &   X \times_Z Z^I \times_Z Y \ar[r] \ar[d]  &  X \ar[d] \\
 &    Z^I \times_Z Y \ar[r]   &  Z  
}
\end{equation}
is commutative, the square in the lower right is cartesian, and the horizontal maps are fibrations.
One can check that if

$$
\xymatrix{
W\ar[ddd] \ar[rrr] \ar[dr]&&& X \ar[ddd] \ar[dl]\\
&W' \ar[r] \ar[d]& X' \ar[d]& \\
& Y' \ar[r]& Z' & \\
Y \ar[rrr] \ar[ur] &&& Z \ar[ul]
}
$$
is a diagram with both squares commutative, with the inner square homotopy cartesian, with the diagonal arrows homotopy equivalences, and with all trapezoids homotopy commutative, then the outer diagram is also homotopy cartesian.

\begin{proof}[Proof of Lemma~\ref{dlnil}]
The proof of this lemma requires a careful comparison of the results of Waldhausen \cite{Waldhausen(1978a),Waldhausen(1978b)} and the
machinery of Davis-L\"uck \cite{DL98}.  We first review the main results of Waldhausen's paper.

Let $S \to A$ and $S \to B$ be {\em pure and free embeddings of rings}, i.e.~there are $S$-bimodule decompositions  $A = S \oplus M$ and $B = S \oplus N$  where $M$ and $N$ are free as right $S$-modules.  Let $A *_SB$ be the associated amalgamated free product of rings.

There is a (up to isomorphisms) commutative square of functors:

\begin{equation*}
\xymatrix{
\nil(S;M,N)  \ar[r]^-F \ar[d]^-G &  \proj(A) \times \proj(B) \ar[d]^-K  \\
 \proj(S)   \ar[r]^-L & \proj^*(A*_SB)}
\end{equation*}

Here
\begin{align*}
F(P,Q,p,q)  & = (A \otimes_S P, B \otimes_S Q) \\
G(P,Q,p,q) & =  P \oplus Q  \\
K(P,Q)  & =   (A*_SB) \otimes_A P \oplus (A*_SB) \otimes_B Q     \\
L(P) & =    (A*_SB) \otimes_S P
\end{align*}

  The exact category $\proj^*(A *_S B)$  is defined by Waldhausen \cite[III.10]{Waldhausen(1978b)}.  It is a  full subcategory of  $\proj(A *_S B)$  containing all finitely generated free modules; in particular its idempotent completion is equivalent to  $\proj(A *_S B)$.

In the formulas below, we write $\otimes$ instead of $\otimes_{S}$.  The above diagram commutes, but we ignore that and define an exact natural transformation $T : K \circ F  \to L\circ G$
$$
T(P,Q,p,q) = \begin{pmatrix}
1 & \widehat q \\
\widehat p & 1
\end{pmatrix} : ((A*_SB) \otimes P) \oplus ((A*_SB) \otimes Q) \to ((A*_SB) \otimes P) \oplus ((A*_SB) \otimes Q)
$$
Here $\widehat p$ is the composite of
$$
1_{A*_SB} \otimes p : (A*_SB) \otimes P \to  (A*_SB) \otimes M  \otimes Q
$$
and the map
$$
\text{mult}_{A*_SB}  \otimes 1_Q : ((A*_SB) \otimes M) \otimes Q \to (A*_SB) \otimes Q.
$$
Likewise for $\widehat q$.

This exact natural transformation gives a natural transformation $QT : Q(K \circ F) \to Q ( L\circ G )$ (here $Q$ denotes the Quillen $Q$-construction), hence a functor
$$
Q\nil(S;M,N) \times (0 \to 1) \to Q\proj^*(A*_SB)
$$
and hence a homotopy $H$ from $BQ ( K \circ F)$ to $BQ (L\circ G) $.  
Here $(0 \to 1)$ denotes the category with two objects 0 and 1 and three morphisms, including a morphism from 0 to 1. Recall that 
a functor $L: (0 \to 1) \times \calc \to \cald$ is the same as a natural transformation between functors $L_0, L_1  : \calc \to \cald$ and 
that $BL : B\left((0 \to 1) \times \calc\right) = I \times B\calc \to B \cald$ gives a homotopy from $BL_0$ to $BL_1$.

What Waldhausen actually proves (see \cite[Theorem 11.3]{Waldhausen(1978b)}) is that the diagram below is homotopy cartesian with respect to the homotopy $H$.

$$
\xymatrix{
BQ\nil(S;M,N)  \ar[r] \ar[d] & BQ\proj(A) \times BQ\proj(B)  \ar[d]  \\
  BQ\proj(S)   \ar[r] & BQ\proj^*(A*_SB)}
$$

This was promoted to a square of non-connective spectra by Bartels-L\"uck \cite[Theorem 10.2 and 10.6]{Bartels-Lueck(hk)} using the Gersten-Wagoner delooping of the $K$-theory of rings.  However, later on we will map from a square constructed using  Schlichting's non-connective spectra for the $K$-theory of exact categories (see \cite[Theorem 3.4]{Sch-exact} and  \cite{Sch-derived}), so we will use Schlichting's construction instead, noting that the Bartels-L\"uck-Gersten-Wagoner delooping of the above square is homotopy equivalent to that of Schlichting's, since Schlichting's delooping is a direct generalization of the Gersten-Wagoner delooping.  One obtains a commutative diagram of non-connective spectra:

\begin{equation} \label{twosidedsquare}
\xymatrix{
\bfnil(S;M,N)  \ar[r] \ar[d] &   \bfK(A) \vee \bfK(B) \ar[d]  \\
 \bfK(S)  \ar[r] & \bfK(A*_SB)}
\end{equation}
homotopy cartesian with respect to a homotopy ${\bf H} : \bfnil(S;M,N) \wedge I_+ \to \bfK(A*_SB)$.  (We have erased the superscript $*$ from the lower right, since the first step in the delooping is to replace the exact category by its idempotent completion, see \cite[Remark 3]{Sch-derived}).

We now turn to the one-sided case.  
Suppose $S$ is a subring of a ring $T$. Let $t \in T^\times$ be a unit so that $c_t(S) = t S t^{-1} = S$.  Then for any (left) $S$-module $P$, define an $S$-module $tP =\{tx : x \in P\}$ with $stx = tc_{t^{-1}}(s)x$ for $s \in S, x\in P$.  There are isomorphisms of $S$-modules
\begin{align*}
tP & \cong c_{t^{-1}}^*P   \cong c_{t*}P  \cong tS \otimes P\\
tx & \leftrightarrow \  x  \ \leftrightarrow \ 1 \otimes x \ \leftrightarrow \ t \otimes x.
\end{align*}
Note that if $P \subset T$ is an $S$-module, then the above definition coincides with the $S$-submodule $tP \subset T$.  There is a similar notation for $S$-bimodules.

Let $\alpha: S \to S$ be a ring automorphism.
Let $S_{\alpha}[t,t^{-1}] = \oplus_{-\infty}^{\infty}\, t^i S$ be the twisted Laurent polynomial ring with $st= t \alpha(s)$ for $s \in S$.  Note that $R[F \rtimes C_\infty] = S_\alpha[t,t^{-1}]$ where $S = RF$ and the automorphism $\alpha$ is induced by conjugation with $t^{-1}$.

There is a commutative square of functors:

\begin{equation}  \label{onesidedcatsquare}
\xymatrix{
\nil(S;tS) \amalg \nil(S; t^{-1}S )  \ar[r]^-{F = F_+ \amalg F_-}
\ar[d]^-{G = G_+ \amalg G_-} &  \proj(S)
\ar[d]^-K  \\
(0 \to 1) \times \proj(S)   \ar[r]^-L & \proj^*S_\alpha[t,t^{-1}]}
\end{equation}

Here
\begin{align*}
F_+(P,p)  = P, \quad & \quad F_-(Q,q)  = t^{-1}Q,\\
G_+(P,p)  = (0,P),  \quad & \quad G_-(Q,q)  = (1,Q),\\
K(P)   =   S_\alpha[t,t^{-1}] \otimes P,    \quad & \quad  \\
L_0(P)  = S_\alpha[t,t^{-1}] \otimes P,    \quad & \quad L_1(Q)  = S_\alpha[t,t^{-1}] \otimes t^{-1}Q,    \\
T_L(P) \colon  L_0(P) \to L_1(P) ; & \quad  \quad x \otimes y \mapsto xt \otimes t^{-1}y.
\end{align*}

The above diagram commutes, but we ignore that and define a natural transformation  $T = T_+ \amalg T_- : K \circ F \to L\circ G$
as follows.
\begin{align*}
T_+(P,p: P \to tP) = T_L(tP)\circ(1 \otimes p) &: S_\alpha[t,t^{-1}] \otimes P  \to S_\alpha[t,t^{-1}] \otimes P, \\
T_-(Q,q: Q \to t^{-1}Q) = (1 \otimes q)\circ T_L(Q)^{-1}&: S_\alpha[t,t^{-1}] \otimes t^{-1}Q  \to S_\alpha[t,t^{-1}] \otimes  t^{-1}Q.
\end{align*}

Waldhausen \cite[Theorem 12.3]{Waldhausen(1978b)} shows that the square \eqref{onesidedcatsquare}, after applying $BQ$, is homotopy cartesian with respect to the homotopy $H = BQT$.  Thus we obtain a
a square of non-connective spectra
\begin{equation} \label{onesidedsquare}
\xymatrix{ \bfnil(S;tS) \vee \bfnil(S; t^{-1}S) \ar[r] \ar[d] & \bfK(S) \ar[d]\\
 D^1_+ \wedge \bfK(S) \ar[r] & \bfK(S_\alpha[t,t^{-1}])
}
\end{equation}
which is homotopy cartesian with respect to the homotopy $\bfH$.

After our extensive discussion of the results of Waldhausen, we return to the situation of the lemma.
We treat the $\g$- and $\go$-case in parallel.
Define $E_{\ff} \g$ and $E_{\ff_0}\go$ as the  pushouts of $\g$-spaces, respectively $\go$-spaces
\begin{equation} \label{geomsquares}
\xymatrix{
S^0 \times \g/F  \ar[r]^-a \ar[d] & \g/G_1 \amalg \g/G_2 \ar[d]
& S^0 \times \go/F \ar[r]^-b \ar[d] & \go/F \ar[d]\\
D^1 \times \g/F \ar[r] & E_{\ff} \g & D^1 \times \go/F \ar[r] & E_{\ff_0} \go.
}
\end{equation}
Here $S^0= \{ -1, +1 \}$ and the upper horizontal ``attaching'' maps are given by
$a (+1, gF)  = gG_1$, $a(-1,gF)= gG_2$, $b( +1, gF) = g F$ and $b (-1, gF)  = g tF$.  Note that $E_{\ff} \g$ is a tree and $E_{\ff_0} \go$ can be
identified with the real line $\IR$ with the $\go$-action induced by the standard $C_\infty$-action.

Davis-L\"uck \cite{DL98} defined the $K$-theory $\Or G$-spectrum by first defining a functor $\bfK_R : \Groupoids \to \Spectra$, assigning to a groupoid $\mathcal G$ the Pedersen-Weibel spectrum of the $K$-theory of the additive category of finitely generated projective $R \mathcal G$-modules.  We instead use the Schlichting spectrum; Schlichting \cite[Section 8]{Sch-derived} shows that its homotopy type agrees with that of Pedersen-Weibel.   The functor $\bfK_R$ has the property that a natural transformation between maps of groupoids $f_0, f_1 : \calh \to \calh'$ induces a homotopy $\bfK_R(f_0) \simeq \bfK_R(f_1)$ and, in particular, equivalence groupoids have homotopy equivalent $K$-spectra (see Example \ref{homotopy_and_K} below.) 
A $G$-set $X$ gives a groupoid $\overline{X}$ whose objects are elements of $X$ and whose morphism set from $x_0 \in X$ to $x_1 \in X$  is given by $\{g \in G : gx_0 = x_1\}$ (composition is given by group multiplication).  As in \cite{DL98}, define an $\Or G$-spectrum $\bfK_R$ by defining $\bfK_R(G/H) =  \bfK_R (\overline{G/H})$.  There is a homotopy equivalence $\bfK_R ( G / H ) \simeq \bfK(RH)$ which follows from the equivalence of groupoids $\overline{H/H} \to \overline{G/H}$ induced from the inclusion.  
Indeed, let $\{g_i\}$ be a set of coset representatives for $G/H$.  Define a retract functor $\Phi : \overline{G/H} \to \overline{G/H}$ by $\Phi(g_iH) = H$ on objects and $\Phi(g : g_iH \to g_jH) = (g_j^{-1}g g_i : H \to H)$ on morphisms.  Then $g_i : \Phi(g_i H) \xrightarrow{\sim} g_i H$ is a natural isomorphism from $\Phi$ to the identity.

For a $G$-CW-complex $X$, set $\bfK_{R}(X) = \map_G(-,X)_+ \wedge_{\Or G} \bfK_R(-)$.
The spectrum $\bfK_{R} (X)$ is the spectrum whose homotopy groups were denoted $H_n^G(X;\bfK_R)$ in the introduction.   Note that the two definitions of $\bfK_R(G/H)$ in terms of groupoids and in terms of $G$-CW-complexes agree by ``Yoneda's Lemma.''

  Applying
$\bfK_{R}( -)$ with $G=\g$ to the left hand pushout square of \eqref{geomsquares} yields the inner square of the diagram:
\begin{small}
\begin{equation} \label{twosidedconsquares}
\xymatrix@R=8mm @C=3mm{
\bfK(RF) \vee \bfK(RF)\ar[ddd] \ar[rrr] \ar[dr]&&& \bfK(RG_1) \vee \bfK(RG_2) \ar[ddd] \ar[dl]\\
&\bfK_R(\g/F) \vee \bfK_R(\g/F) \ar[r] \ar[d]& \bfK_R(\g/G_1) \vee \bfK_R(\g/G_2) \ar[d]& \\
& D^1_+ \wedge \bfK_R(\g/F) \ar[r]& \bfK_{R}(E_{\ff} \g) & \\
D^1_+ \wedge \bfK(RF) \ar[rrr] \ar[ur] &&& \bfK_{R}(E_{\ff} \g) \ar@{=}[ul]
}
\end{equation}
\end{small}
Since $\bfK_{R}$ applied to a pushout square is homotopy cocartesian (\cite[Lemma 6.1]{DL98}) and since a homotopy cocartesian square of spectra is homotopy cartesian (\cite[Lemma 2.6]{LRV}), the inner square is homotopy cartesian.

The remaining maps in the above diagram should be reasonably clear, for example, the upper diagonal maps are induced by maps of groupoids $\overline{F/F} \to \overline{\g/F}$ and  $\overline{G_i/G_i} \to \overline{\g/G_i}$.  It is easy to see that the whole diagram commutes and  the diagonal maps are homotopy equivalences.  Thus the outer square is homotopy cartesian.

The inner square of 
\begin{small}
\begin{equation} \label{onesidedconsquares}
\xymatrix{
\bfK(RF) \vee \bfK(RF)\ar[ddd] \ar[rrr] \ar[dr]&&& \bfK(RF) \ar[ddd] \ar[dl]\\
&\bfK_R(\go/F) \vee \bfK_R(\go/F) \ar[r] \ar[d]& \bfK_R(\go/F) \ar[d]& \\
& D^1_+ \wedge \bfK_R(\go/F) \ar[r]& \bfK_{R}(E_{\ff_0} \go) & \\
D^1_+ \wedge \bfK(RF) \ar[rrr] \ar[ur] &&& \bfK_{R}(E_{\ff_0} \go) \ar@{=}[ul]
}
\end{equation}
\end{small}
is homotopy cartesian, since the right hand square of \eqref{geomsquares} is a pushout diagram.  The long horizontal map on the top is induced by the ring maps $\id \colon RF \to RF$ and $c_{t^{-1}} \colon RF \to RF$.  The definition of the rest of the maps in the above diagram should be reasonably clear. 
Everything commutes, except for the top trapezoid restricted to the second $\bfK(RF)$ factor.  We proceed to give an argument that this is homotopy commutative, using the fact that a natural transformation between maps of groupoids induces a homotopy between the associated maps in $K$-theory.  The upper diagonal maps are induced by the obvious embedding $\inc: \overline{F/F} \to \overline{\go/F}$.  The top horizontal map of the inner square is induced by the map of groupoids $\overline{R_t} : \overline{\go/F} \to \overline{\go/F}$ which is in turn induced by the $\go$-map $R_t : \go/F \to \go/F, \quad R_t(gF) = gtF$.  The upper horizontal map is induced by the map of groupoids $\overline{c_{t^{-1}}}: \overline{F/F} \to \overline{F/F}$ which sends a morphism $f : F \to F$ to the morphism $t^{-1}ft: F \to F$.  Finally, the natural transformation $L_t$ between the maps of groupoids $\inc \circ \overline{c_{t^{-1}}}, \overline{R_t} \circ \inc : \overline{F/F} \to \overline{\go/F}$ is given by the morphism $t : F \to tF$ in $\overline{\go/F}$.

Thus the upper trapezoid homotopy commutes.  The upper diagonal maps are homotopy equivalences since $\inc$ is an equivalence of groupoids.    Hence the outer square is also homotopy commutative.

The squares
\eqref{twosidedsquare} and \eqref{onesidedsquare} specialize to the squares below.

\begin{equation} \label{allsquares}
\xymatrix{
\bfnil(RF;R[G_1-F],R[G_2-F])  \ar[r] \ar[d] &   \bfK(RG_1) \vee \bfK(RG_2) \ar[d]  \\
 \bfK(RF)  \ar[r] & \bfK(R\g), \\
 \bfnil(RF;tRF) \vee \bfnil(RF; t^{-1}RF) \ar[r] \ar[d] & \bfK(RF) \ar[d]\\
 \bfK(RF) \ar[r] & \bfK(R\g_0).\\
}
\end{equation}

We next construct maps from the outer squares of \eqref{twosidedconsquares} and \eqref{onesidedconsquares} to the squares in \eqref{allsquares}.  In the upper left hand corners we use the fact that the Nil categories split off categories of projective modules.  In the upper right hand corner we use the identity.  In the lower left we use the map
$D^1_+ \wedge \bfK(RF) \to \pt_+ \wedge \bfK(RF) = \bfK(RF)$.  In the lower right we use the constant maps $E_{\ff}\g \to \g/\g$ and  $E_{\ff_0}\go \to \go/\go$.

These maps have two extra properties.  First, they are homotopy equivalences on the upper right and lower left hand corners. Second, the maps have the property that the homotopies $\bf H$ are constant on the image of the upper left hand corners of the outer squares.  This means that we have maps from the outer squares (which are homotopy cartesian) to the homotopy cartesian version of the squares in \eqref{allsquares}.  
Compare the discussion of \eqref{hcartwrt}. Hence we have maps from the Mayer-Vietoris exact sequences of the homotopy groups of the outer squares to those of the homotopy cartesian versions of \eqref{allsquares}.

Given a commutative diagram of abelian groups with exact rows
\[
\xymatrix{
\dots \ar[r] & D_n \ar[r] \ar[d] & A_{n-1} \ar[r] \ar[d] & B_{n-1} \oplus C_{n-1} \ar[r] \ar[d]^-{\cong} & D_{n-1} \ar[r] \ar[d] & \dots \\
\dots \ar[r] & D'_n \ar[r]        & A'_{n-1} \ar[r]        & B'_{n-1} \oplus C'_{n-1} \ar[r] & D'_{n-1} \ar[r]  & \dots
}
\]
a diagram chase gives a long exact sequence of abelian groups
\[
\xymatrix{
\dots \ar[r] & D_n \ar[r] & D'_n \oplus A_{n-1} \ar[r] & A'_{n-1} \ar[r] & D_{n-1} \ar[r] & \dots
}
\]

Applying this to the map from the homotopy exact sequences of \eqref{twosidedconsquares}, \eqref{onesidedconsquares}, and \eqref{allsquares}, we obtain exact sequences
\begin{multline}  \label{exactseqs}
\cdots \to H_{n}^\g(E_{\ff}\g;\bfK) \to H_{n}^\g(E_{\all}\g;\bfK) \oplus (K_{n-1}(RF) \oplus K_{n-1}(RF)) \\
\to \nil_{n-1}(RF; R[G_1-F],R[G_2-F]) \to \cdots
\end{multline}
\begin{multline*}
\cdots \to H_{n}^{\go}(E_{\ff_0}\go;\bfK) \to H_{n}^{\go}(E_{\all}\go;\bfK) \oplus (K_{n-1}(RF) \oplus K_{n-1}(RF)) \\
\to \nil_{n-1}(RF; tRF) \oplus \nil_{n-1}(RF;t^{-1}RF) \to \cdots
\end{multline*}

Canceling the copies of $K_{n-1}( RF)$, one obtains long exact sequences
\begin{equation}  \label{lesf}
\cdots \to H_{n}^\g(E_{\ff}\g;\bfK) \to H_{n}^\g(E_{\all}\g;\bfK)
\xrightarrow{\partial} \widetilde \nil_{n-1}(RF; R[G_1-F],R[G_2-F]) \to \cdots
\end{equation}
\begin{multline} \label{lesfo}
\cdots \to H_{n}^{\go}(E_{\ff_0}\go;\bfK) \to H_{n}^{\go}(E_{\all}\go;\bfK) \\
\xrightarrow{\partial} \widetilde \nil_{n-1}(RF; tRF) \oplus  \widetilde \nil_{n-1}(RF; t^{-1} RF) \to \cdots
\end{multline}

The maps labelled $\partial$ are precisely the same as the maps
\begin{align*}
K_nR\g &\to  \widetilde \nil_{n-1}(RF; R[G_1-F],R[G_2-F]) \\
K_nR\g_0 & \to \widetilde \nil_{n-1}(RF; tRF) \oplus  \widetilde \nil_{n-1}(RF;t^{-1} RF)
\end{align*}
which Waldhausen shows are split surjections in \cite[Theorem 11.6 and Theorem 12.6]{Waldhausen(1978b)}.
The Lemma follows.
\end{proof}

Our next goal will be a $C_2$-equivariant version of Lemma \ref{dlnil} (namely Lemma \ref{equivariant}) in the special situation of the introduction. This will be necessary for the proof of Theorem \ref{cor1}.  We need some preliminary definitions to define the $C_2$-action.  The exposition is similar to Brown's discussion of functoriality of homology of groups \cite[Chapter III, Section 8]{Brown}.  
Equivalently one could use L\"uck's notion of an equivariant homology theory \cite{Lueck-Chern}, with details in the Ph.~D.~thesis of J.~Sauer \cite{Sauer}.

\begin{definition}
Let $\gcw$ be the category whose objects are pairs $(G,X)$ where $G$ is a group and $X$ is a $G$-CW-complex.  A morphism $(\alpha, f) :(G,X) \to (G',X')$ is given by a homomorphism $\alpha: G \to G'$ and a $G$-map $f: X \to \alpha^*X'$.  Equivalently, $f$ is determined by a $G'$-map $\alpha_*X \to X'$.

Let $\Or \subset \gcw$ denote the full subcategory whose objects are of the form $(G , G/H)$ with $G$ a group and $H \subset G$ a subgroup. 
Note that for every fixed group $G$ the orbit category $\Or G$ is a subcategory of $\Or$ via the inclusion
$\inc_G \colon  \Or G \to \Or$ given by $G/H \mapsto (G, G/H)$ and
$f \mapsto (\id, f)$.
\end{definition}

Recall that a {\em groupoid} is a category whose morphisms are all invertible and that 
the morphisms in the category of groupoids are just functors. 
\begin{definition}

We denote by $\calg \colon \Or \to \Groupoids$ the following functor. An object $(G, G/H)$ gets mapped 
to the groupoid  whose set of objects is $G/H$ and
whose morphism sets are given by $\mor (gH, g'H) = \{ \gamma \in G \; | \; \gamma gH = g'H \}$. 
For a morphism $( \alpha ,f) \colon (G , G/H) \to (G' , G'/H')$ the corresponding functor 
between groupoids sends the object
$gH$ to $f(g)f(H)$ and the morphism $\gamma \in G$ with $\gamma gH = g'H$ to $\alpha (\gamma )$. Note that
in the notation used so far
\[
\calg \circ \inc_G ( G/H ) = \overline{G/H}.
\]
\end{definition}

\begin{definition}
Let $\bfE: \Groupoids \to \Spectra$ be a functor.  For each group $G$, let $\bfE^G : \OrG \to \Spectra$ be 
the composition 
\[
\Or G \xrightarrow{\inc_G} \Or \xrightarrow{\calg} \Groupoids \xrightarrow{\bfE} \Spectra.
\]
Hence 
$$
\bfE^G(G/H) = \bfE(\overline{G/H}).
$$
Define a functor
\[
\bfE : \gcw \to \Spectra
\]
 as follows.
On objects, let
$$
\bfE(G,X) = \map_G(-,X)_+ \wedge_{\OrG} \bfE^G(-).
$$
Every group homomorphism $\alpha \colon G \to G'$ induces a functor $\Or ( \alpha) \colon \Or G \to \Or G'$ 
that sends $G/H$ to $G'/\alpha(H)$ and the morphism $R_{\gamma}$ to $R_{\alpha(\gamma)}$. (Here recall if $\gamma \in G$  satisfies $\gamma^{-1}K\gamma \subset H$, then $R_\gamma^{}: G/K \to G/H; \quad gK \mapsto g\gamma H$ is a well-defined $G$-map; conversely any $G$-map $G/K \to G/H$ is of this form.)
Moreover there is a natural 
transformation $\mu( \alpha )$ from $\inc_G \colon \Or G \to \Or$ to $\inc_{G'} \circ \Or ( \alpha ) \colon \Or G \to \Or$ given
by $\mu(\alpha)_{G/H} = ( \alpha , \alpha_{G/H} ) \colon (G, G/H ) \to ( G' , G' / \alpha( H) )$ with $\alpha_{G/H}( gH) = 
\alpha(g) \alpha(H)$. If we compose with $\calg$ we will below often use the short notation
\[
\overline{\alpha} \colon \overline{ G/H} \to \overline{G/H}
\]
for 
\[
\calg ( \mu(\alpha)_{G/H} ) \colon \calg \circ \inc_G (G/H) \to \calg \circ \inc_G ( G/H).
\]

Every morphism $(\alpha,f): (G,X) \to (G',X')$ in $\gcw$ induces a natural transformation
$\sigma( f)$ from the contravariant functor $\map_G ( - , X) \colon \Or G \to \spaces$ to the contravariant functor
$\map_{G'} ( - , X') \circ \Or( \alpha) \colon \Or G \to \spaces$. Using the homeomorphism $\map_G ( G/H , X) \xrightarrow{\cong} X^H$, 
$\phi \mapsto \phi( eH)$ the transformation $\sigma(f)$ is given at the object $G/H$ by 
$\sigma(f)_{G/H} \colon X^H \to X'^{\alpha(H)}$, $x \mapsto f(x)$.

Summarizing we have functors and natural transformations 
\[
\xymatrix{
\spaces^{\op} \ar@{=}[d] & \ar@{=>}[dr]_-{\sigma ( f)} & &\ar[lll]_-{\map_G ( - ,X)} \ar[d] \Or G  \ar[d]_-{\Or(\alpha)} \ar[rrr]^-{\inc_G} & & \ar@{=>}[dl]^-{\mu (\alpha)} & \Or \ar@{=}[d]\\
\spaces^{\op} & & &\ar[lll]^-{\map_{G'} ( - ,X')} \Or G' \ar[rrr]_-{\inc_{G'}} & &  & \Or .
}
\]
Composing with $\bfE \circ \calg \colon \Or \to \Spectra$ on 
the right hand side we get a similar diagram with $\inc_G$, $\inc_{G'}$ and $\mu ( \alpha )$ replaced by
$\bfE^G$, $\bfE^{G'}$ and $\bfE \circ \calg ( \mu ( \alpha ) )$. This is exactly what is needed in order to induce a map
\[
\bfE( \alpha , f) \colon \bfE( G, X) \to \bfE (G' , X' )
\]
between the balanced smash products.  We have now defined $\bfE$ on morphisms.

There is a similar functor $\bfE : \gcwp \to \Spectra$ where an object $(G,(X,Y))$ of $\gcwp$ is a group $G$ and a $G$-CW-pair $(X,Y)$.  We omit the details of the definition in this case.
\end{definition}

Given a subgroup $H \subset G$, a $G$-space $X$, and an element $\gamma \in G$, define the $\gcw$-morphism
\begin{align*}
(c_{\gamma},\gamma \cdot): (H,\text{res}_H X) & \to (\gamma H\gamma^{-1},\text{res}_{\gamma H\gamma^{-1}} X)\\
(h,x) & \mapsto  (\gamma h\gamma^{-1},\gamma x),
\end{align*}
where $\text{res}_H X$ is the $G$-set $X$ with the action restricted to $H$.
Note $(c_e,e\cdot) = \id$ and $(c_\gamma,\gamma\cdot)) \circ (c_\gamma',\gamma'\cdot)  = (c_{\gamma\gamma'},\gamma \gamma' \cdot )$.  Thus if $H$ is normal in $G$, conjugation defines a $G$-action on the spectrum $\bfE(H,\res_H X)$, where $\gamma \in G$ acts by $\bfE(c_\gamma,\gamma\cdot)$.

We would like inner automorphisms to induce the identity, but we need another condition on our functor $\bfE$.

Let $( 0 \leftrightarrow 1)$ denote the groupoid with two distinct but isomorphic objects
and four morphisms.

\begin{definition} \label{h-inv}
A functor $\pi \colon \Groupoids \to \Ab$ from the category of groupoids to the category of abelian groups 
is called homotopy invariant if one of the following equivalent conditions is satisfied.
\begin{enumerate}
\item
For every groupoid $\calh$ the functor $i_0 \colon \calh \to \calh \times ( 0 \leftrightarrow 1 )$, 
which sends the object $x$ to $(x,0)$ induces an isomorphism $\pi(i_0)$.
\item
If there exists a natural transformation from the  functor $f_0 \colon \calh \to \calh'$ to $f_1 \colon \calh \to \calh'$, then 
$\pi( f_0 ) = \pi( f_1 )$.
\item
If $f \colon \calh \to \calh'$ is an equivalence of categories then $\pi(f)$ is an isomorphism.
\end{enumerate}
A functor $\bfE \colon \Groupoids \to \Spectra$ is called homotopy invariant 
if $\pi_n ( \bfE ( - ))$ is homotopy invariant for every $n \in \IZ$.
\end{definition}

\begin{proof}[Proof that these conditions are equivalent]
Let $p \colon \calh \times ( 0 \leftrightarrow 1) \to \calh$ denote the projection. If $\pi(i_0)$ is an isomorphism, then $p \circ i_0 = \id$
implies that $\pi(p)$ is its inverse. Note that the natural transformation between $f_0$ and $f_1$ is automatically 
objectwise an isomorphism and hence gives rise to a functor $h \colon \calh \times (0 \leftrightarrow 1 ) \to \calh'$ with
$f_0 = h \circ i_0$ and $f_1 = h \circ i_1$. Now $\pi(f_1) = \pi( h \circ i_1 ) = \pi( h) \pi( i_0 ) \pi(p) \pi(i_1) = \pi( h \circ i_0 ) = \pi( f_0)$
shows that (i) implies (ii). That (ii) implies (iii) follows straightforward from the definitions and since $i_0$ is an equivalence
of categories (i) follows from (iii).
\end{proof}

\begin{example}  \label{homotopy_and_K} 
For any ring $R$ and integer $i$, the functor $\pi_i\bfK_R : \Groupoids \to \Ab$ is homotopy invariant.  This is because a natural transformation between $f_0$ and $f_1$ induces an exact natural transformation between the associated exact functors on the categories of finitely generated projective modules.  In fact, the spectrum level maps $\bfK_R(f_0)$ and $\bfK_R(f_1)$ are homotopic.
\end{example}

For an $\Or G$-groupoid $F \colon \Or G \to \Groupoids$ we denote by $F \times (0 \leftrightarrow 1)$ the 
$\Or G$-groupoid with $(F \times (0 \leftrightarrow 1)) ( G/H ) = F( G/H) \times ( 0 \leftrightarrow 1 )$. As above
$i_0 \colon F \to  F \times ( 0 \leftrightarrow 1)$ denotes the obvious inclusion with $i_0 ( x) = (x , 0)$ where $x$ is an an object or morphism of $F(G/H)$.
\begin{lemma} \label{h-inv-sp}
Let $\bfE \colon \Groupoids \to \Spectra$ be a homotopy invariant functor and let $X \colon \Or G \to \spaces$ be a
contravariant functor which is a free $\Or G$-CW-complex, then the following holds.
\begin{enumerate}
\item
For every $\Or G$-groupoid $F \colon \Or G \to \Groupoids$ the inclusion $i_0 \colon F \to F \times (0 \leftrightarrow 1)$
induces an isomorphism
\[
\pi_{\ast} ( \id_{X_+} \sma \bfE (i_0) ) \colon
\pi_{\ast}( X_+ \sma_{\Or G} \bfE \circ F ) \to \pi_{\ast}( X_+ \sma_{\Or G} \bfE \circ ( F \times (0 \leftrightarrow 1)).
\]
\item
If $f_0 \colon F \to F'$ and $f_1 \colon F \to F'$ are maps of $\Or G$-groupoids such that there exists a homotopy between them, i.e.\ a map of $\Or G$-groupoids
\[
h \colon F \times (0 \leftrightarrow 1) \to F'
\]
with $h \circ i_0 = f_0$ and $h \circ i_1 = f_1$ then the induced maps coincide:
\[
\pi_{\ast} ( \id_{X_+} \sma \bfE ( f_0 ) ) = \pi_{\ast} ( \id_{X_+} \sma \bfE ( f_1) ).
\]
\item
If $f \colon F \to F'$ is a map of $\Or G$-groupoids, for which there exists a map $g \colon F' \to F$ of $\Or G$-groupoids
together with homotopies as defined in (ii) showing $g \circ f \simeq \id_F$ and $f \circ g \simeq \id_{F'}$ then the induced map
\[
\pi_{\ast} (\id_{X_+} \sma \bfE (f) ) \colon \pi_{\ast} ( X_+ \sma_{\Or G} \bfE \circ F ) \to 
\pi_{\ast} ( X_+ \sma_{\Or G} \bfE \circ F' )
\]
is an isomorphism.
\end{enumerate}
\end{lemma}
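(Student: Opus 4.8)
The plan is to reduce all three parts to part~(i) and then obtain (ii) and (iii) by formal manipulations, mirroring the proof above that conditions (i)--(iii) of Definition~\ref{h-inv} are equivalent.

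For part~(i), I would first unwind homotopy invariance of $\bfE$: for every groupoid $\calh$ the map $\bfE(i_0)\colon \bfE(\calh)\to \bfE(\calh\times ( 0 \leftrightarrow 1 ))$ induces isomorphisms on all homotopy groups, i.e.\ is a weak equivalence of spectra. Applying this at $F(G/H)$ for each object $G/H$ of $\Or G$, the natural transformation $\bfE(i_0)\colon \bfE\circ F\to \bfE\circ(F\times ( 0 \leftrightarrow 1 ))$ is an objectwise weak equivalence of $\Or G$-spectra. The key input is then the standard fact that, for a free $\Or G$-CW-complex $X$, the functor $X_+\sma_{\Or G}-$ sends objectwise weak equivalences of $\Or G$-spectra to weak equivalences of spectra (compare \cite[Section~4]{DL98}); applying it to $\bfE(i_0)$ gives the isomorphism asserted in (i). If a self-contained proof of this input is wanted, I would run the usual induction over the equivariant cells of $X$: by the enriched Yoneda lemma a single free cell contributes $(\map_G(-,G/H)\times D^n)_+\sma_{\Or G}\bfE\simeq \bfE(G/H)$, the attaching pushouts become homotopy cocartesian squares of spectra, and $\pi_{\ast}$ commutes with the colimit over the skeleta of $X$.

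For part~(ii), I would write $f_0=h\circ i_0$ and $f_1=h\circ i_1$ and introduce the projection $p\colon F\times ( 0 \leftrightarrow 1 )\to F$, with $p\circ i_0=\id_F=p\circ i_1$. By functoriality of $\bfE$ and of $\id_{X_+}\sma_{\Or G}-$ it is enough to show $\pi_{\ast}(\id_{X_+}\sma\bfE(i_0))=\pi_{\ast}(\id_{X_+}\sma\bfE(i_1))$, since then composing with $\pi_{\ast}(\id_{X_+}\sma\bfE(h))$ yields $\pi_{\ast}(\id_{X_+}\sma\bfE(f_0))=\pi_{\ast}(\id_{X_+}\sma\bfE(f_1))$. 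From $p\circ i_0=\id_F$ together with part~(i), $\pi_{\ast}(\id_{X_+}\sma\bfE(p))$ is the two-sided inverse of the isomorphism $\pi_{\ast}(\id_{X_+}\sma\bfE(i_0))$; the relation $p\circ i_1=\id_F$ then forces $\pi_{\ast}(\id_{X_+}\sma\bfE(i_1))$ to be this same inverse, so the two agree.

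For part~(iii), I would feed the homotopies $g\circ f\simeq\id_F$ and $f\circ g\simeq\id_{F'}$ into part~(ii); combined with functoriality they give $\pi_{\ast}(\id_{X_+}\sma\bfE(g))\circ\pi_{\ast}(\id_{X_+}\sma\bfE(f))=\id$ and the reverse composite is also the identity, so $\pi_{\ast}(\id_{X_+}\sma\bfE(f))$ is an isomorphism. I expect the main obstacle to be the homotopy-theoretic input used in (i) --- that the balanced smash product over $\Or G$ with a free $\Or G$-CW-complex is homotopy invariant in the $\Or G$-spectrum variable --- since everything else is pure diagram chasing; this input is exactly of the kind established in \cite{DL98}.
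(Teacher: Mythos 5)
Your proof is correct and follows essentially the same approach as the paper. The paper notes that (i), (ii), (iii) are equivalent by an argument parallel to the proof just given for Definition~\ref{h-inv}, then proves (iii) directly by observing that $\bfE(f)$ is an objectwise weak equivalence of $\Or G$-spectra and invoking the skeletal induction of \cite[Theorem~3.11]{DL98}; you instead prove (i) directly by the same two-step argument and then run the formal implications (i)$\Rightarrow$(ii)$\Rightarrow$(iii), which is just a different starting point in the same circle of equivalences, with the identical key technical input.
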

\begin{remark} \label{rem-spelled-out}
The existence of a homotopy between $f_0$ and $f_1$ as in (ii) means in concrete terms that for all $G/H \in \obj \Or G$
there exists a natural transformation $\tau (G/H)$ from $f_0(G/H) \colon F( G/H) \to F'(G/H)$ to
$f_1(G/H) \colon F(G/H) \to F'(G/H)$ such that the following condition is satisfied: for all objects $x \in \obj F(G/H)$ and all
$G$-maps $\alpha \colon G/H \to G/K$ the morphism $\tau (G/H)_x$ in $F'(G/H)$ is mapped under 
$F'( \alpha)$ to $\tau (G/K)_{F(\alpha)(x)}$, i.e.\
\[
F'( \alpha) ( \tau(G/H)_x) = \tau(G/K)_{F(\alpha)(x)}.
\]
\end{remark}
\begin{proof}[Proof of Lemma~\ref{h-inv-sp}]
Completely analogous to the proof of Definition~\ref{h-inv} above one shows that (i), (ii) and (iii) are equivalent. 
It hence suffices to show that (iii) holds. By Definition~\ref{h-inv}~(iii) the fact that $\bfE$ is homotopy invariant 
implies that for 
every $G/H$ the map
\[
\pi_{\ast}( \bfE ( f (G/H))) \colon \pi_{\ast} ( \bfE ( F(G/H))) \to \pi_{\ast} ( \bfE ( F'(G/H))) 
\]
is an isomorphism. Hence $\bfE (f)$ is a weak equivalence of $\Or G$-spectra. A well known argument 
(compare Theorem~3.11 in \cite{DL98})
using induction
over the skeleta of $X$ shows the claim.
\end{proof}

\begin{lemma}[Conjugation induces the identity] \label{cong=id}
Suppose $\bfE: \Groupoids \to \Spectra$ is a homotopy invariant functor.  Let $G$ be a group and $X$ a $G$-CW-complex.  Then for any $\gamma \in G$, 
$$
\bfE(c_\gamma,\gamma\cdot) : \bfE(G,X) \to \bfE(G,X)
$$ 
induces the identity on homotopy groups.  Thus the conjugation action of $G$ on $\pi_*\bfE(G,X)$ is the trivial action.
\end{lemma}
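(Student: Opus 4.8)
The plan is to reduce to the case of a single orbit $X = G/H$ and there to exploit that conjugation by $\gamma$, being an inner automorphism of $G$, acts trivially on the orbit category. Concretely, the $G$-isomorphisms
\[
\nu_{G/H} \colon G/H \xrightarrow{\ \cong\ } G/\gamma H\gamma^{-1}, \qquad gH \longmapsto g\gamma^{-1}(\gamma H\gamma^{-1})
\]
(that is, $\nu_{G/H} = R_{\gamma^{-1}}$) assemble, by a routine coset computation using $R_b\circ R_a = R_{ab}$, into a natural isomorphism $\nu \colon \id_{\Or G} \Rightarrow \Or(c_\gamma)$. I will use $\nu$ to trade the functor $\Or(c_\gamma)$ for the identity, so that the whole problem takes place over a fixed indexing category.

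For the reduction I will use the standard formal properties of the functor $X \mapsto \bfE(G,X) = \map_G(-,X)_+ \wedge_{\Or G} \bfE^G$: it is invariant under $G$-homotopy equivalences, it sends $G$-pushouts to homotopy cocartesian and hence (stably) homotopy cartesian squares of spectra, and it commutes with sequential colimits; these are properties of the balanced smash product and are precisely what goes into the proof of Lemma~\ref{h-inv-sp} (see also \cite[Lemma~6.1]{DL98}). Since $\gamma\cdot \colon X \to X$ is an automorphism of the $G$-CW-structure which on an equivariant cell $G/H \times D^n$ is $L_\gamma \times \id$, where $L_\gamma \colon G/H \to G/H$, $gH \mapsto \gamma gH$, the morphism $(c_\gamma,\gamma\cdot)$ restricts to $(c_\gamma,\gamma\cdot)$ on each skeleton and to $(c_\gamma,L_\gamma \times \id)$ on each equivariant cell, compatibly with the attaching maps. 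An induction over the skeleta, applying the five lemma to the resulting map of Mayer--Vietoris sequences of $\pi_* \bfE(G,-)$ and then passing to the colimit, together with $\bfE(G,\coprod_i G/H_i) = \bigvee_i \bfE(G, G/H_i)$ and the fact that $L_\gamma$ preserves each component, reduces the claim to showing that $\bfE(c_\gamma, L_\gamma)$ induces the identity on $\pi_* \bfE(G, G/H)$.

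For a single orbit, the co-Yoneda lemma gives a natural identification $\bfE(G, G/H) = \mor_{\Or G}(-, G/H)_+ \wedge_{\Or G} \bfE^G \simeq \bfE^G(G/H) = \bfE(\overline{G/H})$, valid on the full subcategory $\Or \subset \gcw$ (this is how the functor $\bfE$ on $\gcw$ is built, cf.\ the ``Yoneda'' remark in the text). Since $(c_\gamma, L_\gamma)$ is a morphism of $\Or$, it corresponds under this identification to $\bfE$ applied to the functor $\calg(c_\gamma, L_\gamma) \colon \overline{G/H} \to \overline{G/H}$ which sends the object $gH$ to $\gamma gH$ and the morphism $\delta \colon gH \to g'H$ (that is, $\delta \in G$ with $\delta gH = g'H$) to $\gamma\delta\gamma^{-1} \colon \gamma gH \to \gamma g'H$. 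The morphisms $\gamma \colon gH \to \gamma gH$ of $\overline{G/H}$ then form a natural isomorphism $\id_{\overline{G/H}} \Rightarrow \calg(c_\gamma, L_\gamma)$ (naturality being the identity $(\gamma\delta\gamma^{-1})\gamma = \gamma\delta$). Since $\bfE$ is homotopy invariant, a natural transformation between functors of groupoids induces the same map on homotopy groups (Definition~\ref{h-inv}), so $\pi_* \bfE(\calg(c_\gamma, L_\gamma)) = \pi_* \bfE(\id) = \id$. This settles the orbit case, hence the general case, and the final sentence of the lemma follows because $\gamma$ was arbitrary.

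The only real work is bookkeeping: verifying that $(c_\gamma,\gamma\cdot)$ is cellular and compatible with the skeletal filtration in the strong sense needed to produce a map of Mayer--Vietoris sequences, and that $\bfE|_{\Or}$ is indeed $\bfE \circ \calg$ under co-Yoneda. The conceptual content --- the two natural isomorphisms $\Or(c_\gamma) \cong \id$ and $\calg(c_\gamma, L_\gamma) \cong \id$, both given by ``multiplication by $\gamma$'' --- is each a one-line verification.
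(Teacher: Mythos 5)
Your proof is correct and hinges on exactly the same algebraic heart as the paper's: the natural transformation $\overline{G/H} \Rightarrow \overline{G/H}$ whose component at the object $gH$ is the morphism $\gamma \colon gH \to \gamma gH$. The structural difference is one of packaging. The paper first proves Lemma~\ref{h-inv-sp}, which absorbs precisely the skeletal/Mayer--Vietoris induction you carry out by hand; it then identifies $\bfE(c_\gamma,\gamma\cdot)$ with $\id\wedge\bfE\circ\calg(T)$ globally (via the computation $(\id,R_\gamma)\circ(c_\gamma,\gamma_{G/H})=(c_\gamma,L_\gamma)$) and applies Lemma~\ref{h-inv-sp}(ii) once, using that $\map_G(-,X)$ is a free $\Or G$-CW-complex. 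You instead unroll that lemma: you re-run the skeletal induction specifically for this morphism, reduce to a single orbit, and then invoke co-Yoneda. Both work; the paper's route is somewhat cleaner because it factors through a reusable general statement, whereas yours is self-contained but repeats boilerplate.

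Two small things deserve more care in your write-up. First, the natural isomorphism $\nu\colon\id_{\Or G}\Rightarrow\Or(c_\gamma)$ you introduce in the opening paragraph is never actually needed; the relevant ``trading'' happens implicitly in the co-Yoneda step, where $(c_\gamma,L_\gamma)$ is matched with a self-functor of $\overline{G/H}$, and dropping paragraph one would streamline the argument. Second, and more substantively, the assertion that co-Yoneda identifies $\bfE|_{\Or}$ with $\bfE\circ\calg$ \emph{as functors}, i.e.\ that $\bfE(c_\gamma,L_\gamma)$ corresponds to $\bfE(\calg(c_\gamma,L_\gamma))$, is exactly the content the paper verifies by the bare-hands computation $\bfE(c_\gamma,\gamma\cdot)[x,y]=[\gamma x,\bfE(\overline{c_\gamma})y]=[x,\bfE(\overline{R_\gamma}\,\overline{c_\gamma})y]$ together with $(\id,R_\gamma)\circ(c_\gamma,\gamma_{G/H})=(c_\gamma,L_\gamma)$ in $\Or$. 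The paper's ``Yoneda'' remark only asserts agreement on objects; the morphism compatibility across the nontrivial group automorphism $c_\gamma$ is not automatic and is where the balanced smash product's naturality in both variables must be unwound. You should flag this as a computation rather than a consequence of co-Yoneda.
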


\begin{proof}
Let $T$ be the natural transformation from $\inc_G \colon \Or G \to \Or$ to itself that is given
at the object $G/H$ by 
\[
T_{G/H} = ( c_\gamma , L_{\gamma} ) \colon (G, G/H) \to (G, G/H),
\]
where $L_{\gamma} \colon G/H \to G/H$ maps $gH$ to $\gamma gH$. 
For every $G/H \in \obj\Or G$ there is a natural transformation $\tau ( G/H)$ from 
the identity $\id \colon (\calg \circ \inc_G)(G/H) \to (\calg \circ \inc_G) (G/H)$ to
$\calg (T_{G/H}) \colon (\calg \circ \inc_G)(G/H) \to (\calg \circ \inc_G)(G/H)$ given at the object $gH \in \obj \calg \circ \inc_G (G/H) = \overline{G/H}$ by the morphism $\gamma \colon gH \to \gamma gH$. One checks that the condition spelled out in
Remark~\ref{rem-spelled-out} is satisfied. Hence $\calg ( T) \colon \calg \circ \inc_G \to \calg \circ \inc_G$ 
and the identity $\id \colon \calg \circ \inc_G \to \calg \circ \inc_G$ are homotopic in the sense of 
Lemma~\ref{h-inv-sp}~(ii). According to Lemma~\ref{h-inv-sp}~(ii) the map 
\[
\id \sma \bfE \circ \calg (T) \colon \map_G ( - , X)_+ \sma_{\Or G} \bfE^G \to \map_G ( - , X)_+ \sma_{\Or G} \bfE^G
\] 
induces the identity on homotopy groups, because for a $G$-CW-complex $X$ the $\Or G$ space 
$\map_G( - , X)$ is a free $\Or G$-CW-complex.

We claim that 
\[
\bfE ( c_\gamma , \gamma \cdot ) = \id \sma \bfE \circ \calg ( T ).
\]
Indeed an element in the $n$th space of the spectrum
$\bfE(G,X)$ is represented by
$$
(x,y) \in X^H \times \bfE(\overline{G/H})_n
$$
where $H$ is a subgroup of $G$.  Then
\begin{align*}
\bfE(c_\gamma, \gamma \cdot))[x,y] & = [\gamma x, \bfE(\overline{c_\gamma})y] \\
& = [x, \bfE(\overline{R_\gamma}) \circ \bfE(\overline{c_\gamma})y].
\end{align*}
Here $\bfE ( \overline{R_{\gamma}} ) \circ \bfE (\overline{c_\gamma}) = 
 \bfE \left( \calg \left(\left(\id , R_{\gamma}\right) \circ \left(c\left(\gamma\right) , c_{\gamma_{G/H}}\right)\right)\right)$

A straightforward computation in $\Or$ shows that
\[
(\id , R_{\gamma}) \circ ( c_\gamma , c_{\gamma_{G/H} }) = ( c_\gamma, L_{\gamma} ) \colon (G , G/H) \to (G, G/H).
\]
Thus $\bfE( c_\gamma , \gamma \cdot ) = \id \wedge \bfE \circ \calg(T)$ which we have already shown induces the identity on homotopy groups.
\end{proof}

%Fixing a subgroup $K$ of $G$, define a natural isomorphism 
%between $\overline{R_\gamma}$ and $\overline{c(\gamma^{-1})}$
%given by 
%\begin{align*}
%\gamma^{-1} \cdot :\overline{R_\gamma}(gK) = g \gamma (\gamma^{-1}K\gamma)  & \to \overline{c(\gamma^{-1})}(gK) = \gamma^{-1} g\gamma  (\gamma^{-1} K\gamma).
%\end{align*}
%This defines a map of groupoids
%$\gamma^{-1} \cdot : \overline{G/K} \times \{0 \leftrightarrow 1 \} \to \overline{G/\gamma^{-1}K\gamma}.$
%Furthermore, given a $G$-map $G/H \to G/K$, there is a commutative diagram
%$$
%\begin{diagram}
%\node{\overline{G/H} \times \{0 \leftrightarrow 1\}} \arrow{s} \arrow{e,t}{\gamma^{-1}\cdot} \node{\overline{G/\gamma^{-1} H\gamma}} \arrow{s} \\
%\node{\overline{G/K} \times \{0 \leftrightarrow 1\}}  \arrow{e,t}{\gamma^{-1}\cdot}  \node{\overline{G/\gamma^{-1} K\gamma}}
%\end{diagram}
%$$
%Applying our homotopy invariant functor $\bfE$ we have our desired homotopy.
%\end{proof}

%If $g \in H$, then $\bfE(c(g))$ is the identity:
%\begin{align*}
%\bfE(c(g))[x,y] & = [gx,\bfE(\overline{c(g)})y]\\
%& = [x, (\bfE(\overline{c(h^{-1})})\circ \bfE(\overline{c(h)}))y]\\
%& = [x,y]
%\end{align*}

\begin{remark}  \label{conjugation}
Thus if $H$ is normal in $G$ and $X$ is a $G$-space, the conjugation action of $G$ on $\bfE(H,\res_H X)$ induces a $G/H$-action on $\pi_*\bfE(H, \res_H X)$.
\end{remark}

We now return to the group theoretic situation of the introduction.

\begin{lemma} \label{equivariant}
Let $p : \g \to D_\infty$ be an epimorphism, $C_\infty$ the maximal infinite cyclic subgroup of $D_\infty$, and $\go = p^{-1}C_\infty$.  Choose models for $E_{\all}\go$ and $E_{{\ff}_0}\go$ by restricting the $\g$-actions on $E_{\all}\g$ and $E_{{\ff}_0}\g$ to $\go$-actions.   By the above remark, conjugation induces a $C_2 = \g/\go$-action on $H_n^{\go}(E_{\all}\go, E_{{\ff}_0}\go;\bfK_R)$.  This, and  the isomorphism of Lemma \ref{dlnil}(ii),  induces a $C_2$-action on $\widetilde{\Nil}_{n-1}(RF; tRF) \oplus  \widetilde{\Nil}_{n-1}(RF;t^{-1}RF)$.  The  $C_2 = \g/\go$-action    switches the two summands.
\end{lemma}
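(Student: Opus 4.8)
The plan is to realise the generator of $C_2=\g/\go$ by conjugation with an element $\tau\in\g$ whose image $p(\tau)\in D_\infty$ is a reflection, to compute the resulting $\gcwp$-self-map $(c_\tau,\tau\cdot)$ of the pair $(\go,(E_{\all}\go,E_{\ff_0}\go))$ geometrically, and then to transport the computation through the chain of homotopy cartesian squares that proves Lemma~\ref{dlnil}(ii). Two algebraic remarks set the stage. First, since $D_\infty$ has trivial centre, conjugation by the reflection $p(\tau)$ inverts the generator $p(t)$ of $C_\infty$, so $\tau t\tau^{-1}\in t^{-1}F$; as $F=\ker p$ is normal in $\g$, the ring automorphism $c_\tau\colon R\go\to R\go$ preserves $S:=RF$, and carries the $S$-sub-bimodule $tRF$ isomorphically onto $t^{-1}RF$ and vice versa. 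Second, $\tau^2\in\go$, so by Lemma~\ref{cong=id} the self-map induced by $(c_\tau,\tau\cdot)$ on $H_n^{\go}(E_{\all}\go,E_{\ff_0}\go;\bfK_R)$ is an involution, consistently with Remark~\ref{conjugation}.

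For the geometry, $E_{\ff_0}\go$ is the real line $\IR$ with the $\go$-CW-structure used in \eqref{geomsquares}, whose $0$-cells and $1$-cells each form a copy of $\go/F$. This structure extends to a $\g$-CW-complex on which $\g$ acts through $p$ by the standard $D_\infty$-action on $\IR$, so that $(c_\tau,\tau\cdot)$ is realised by the reflection $x\mapsto-x$ of $\IR$, which is cellular. A direct calculation then yields the $(c_\tau,\tau\cdot)$-action on the right-hand pushout square of \eqref{geomsquares}: it is $gF\mapsto\tau g\tau^{-1}F$ on the $0$-cells, $(d,gF)\mapsto(-d,\tau g\tau^{-1}t^{-1}F)$ on $D^1\times\go/F$, and $(\epsilon,gF)\mapsto(-\epsilon,\tau g\tau^{-1}t^{-1}F)$ on $S^0\times\go/F$, all compatible with the attaching map $b$. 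The point to retain is that it reverses the $D^1$-coordinate, so on the $S^0$-factor it is exactly the swap $+1\leftrightarrow-1$, the only twist being $g\mapsto\tau g\tau^{-1}t^{-1}$ on the $\go/F$-factor.

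Now apply $\bfK_R$ and propagate this self-map through diagram \eqref{onesidedconsquares}, through the comparison maps to \eqref{allsquares}, and through the concluding diagram chase, reading off the induced involution on $\widetilde{\Nil}_{n-1}(RF;tRF)\oplus\widetilde{\Nil}_{n-1}(RF;t^{-1}RF)$. In Waldhausen's square \eqref{onesidedcatsquare} the summand $\nil(S;tS)$ is attached — via $F_+$, $G_+$ and $L_0$ — to the object $0$ of $(0\to1)$, which in the geometric picture corresponds to the point $+1\in S^0$ (the untwisted attaching map $b(+1,gF)=gF$ and the ring map $\id$), whereas $\nil(S;t^{-1}S)$ is attached to the object $1$, corresponding to the point $-1\in S^0$ (the attaching map $b(-1,gF)=gtF$ and the ring map $c_{t^{-1}}$). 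Since $(c_\tau,\tau\cdot)$ swaps these two points and $c_\tau$ carries $tRF$ onto $t^{-1}RF$ as $S$-bimodules, the $C_2$-action transposes the two Nil-summands. The leftover twists — $c_\tau|_S$ on the $F$-factors and the $t^{-1}$-translation — only alter the identification of each summand with itself, and are automatically consistent because $c_\tau(tRF)=t^{-1}RF$; they do not affect the transposition.

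The main obstacle is this last step: one must verify that every stage of the proof of Lemma~\ref{dlnil} — the inner and outer homotopy cartesian squares of \eqref{onesidedconsquares}, the comparison maps to \eqref{allsquares}, the homotopies ${\bf H}$, and the final diagram chase — is $C_2$-equivariant for the conjugation self-map, and that the spurious twists wash out. This is made tractable by the homotopy invariance of $\bfK_R$ (Example~\ref{homotopy_and_K}) together with Lemmas~\ref{h-inv-sp} and~\ref{cong=id}, which let one replace maps of groupoids by naturally isomorphic ones and discard the homotopically trivial contributions; the comparison then reduces to identifying the conjugation functor on the Nil-categories with $c_\tau^{\ast}$, which visibly exchanges $\nil(RF;tRF)$ and $\nil(RF;t^{-1}RF)$.
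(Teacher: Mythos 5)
Your proposal follows essentially the same strategy as the paper: realize the $C_2$-action by the conjugation morphism $(c_s,s\cdot)$ for a lift $s\in\g$ of a reflection, extend the $\go$-action on the right-hand pushout square of \eqref{geomsquares} to a $\g$-action, and then push this through the homotopy cartesian squares \eqref{onesidedconsquares} and \eqref{allsquares} using homotopy invariance of $\bfK_R$ and Lemmas~\ref{h-inv-sp} and \ref{cong=id} to discard inner-automorphism contributions.

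The one place where you underestimate the work is the final clause, that ``identifying the conjugation functor on the Nil-categories with $c_\tau^{\ast}$ \ldots\ visibly exchanges $\nil(RF;tRF)$ and $\nil(RF;t^{-1}RF)$.'' There is no a priori conjugation action on Waldhausen's exact category $\nil(RF;tRF)\amalg\nil(RF;t^{-1}RF)$; one has to \emph{construct} a $\g$-action there. The subtlety is that for $g\in\g$ one has $c_{g\ast}(t^{\pm1}P)\cong t^{\mp1}c_{g\ast}(c_{f\ast}P)$ for an appropriate $f\in F$ depending on $g$, so the naive formula does not close up; the paper's step 3 absorbs this defect using the natural transformations $\ell_{f,-}\colon c_{f\ast}\Rightarrow\id$, verifies that $F$ acts by exact natural transformations to the identity (hence homotopically trivially), and only then restricts along $\langle s,F\rangle/F\cong C_2$. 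It also has to check that this constructed action is compatible with the map to $\bfK(RF)$ in the bottom square of \eqref{allsquares} and with Waldhausen's homotopy $\bfH$; without that compatibility the action does not descend to the relative term of \eqref{lesfo}. These checks are the actual content of the lemma rather than a routine ``wash-out,'' so your sketch should be read as correct in outline but with the decisive step left unproved.
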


\begin{proof}   A {\em weak homotopy $C_2$-spectrum} is a spectrum  $\bfE$ with a self map $T : \bfE \to \bfE$ so that $T \circ T$ induces the identity on homotopy groups.  A {\em weak homotopy $C_2$-map} between weak homotopy $C_2$-spectra $(\bfE,T)$ and $(\bfE',T')$ is a map of spectra $f : \bfE \to \bfE'$ so that $\pi_*(f \circ T) = \pi_*(T' \circ f)$.

We define weak homotopy $C_2$-actions on the outer and inner squares of \eqref{onesidedconsquares}
and on the bottom square of \eqref{allsquares} in such a way so that $C_2$ acts on the Mayer-Vietoris exact sequences  \eqref{exactseqs}, that the map between them is $C_2$-equivariant, and that $C_2$ switches the $\widetilde{\nil}$ summands.

There are basically three steps to the proof: to define the weak homotopy $C_2$-actions on the inner square of \eqref{onesidedconsquares}, the outer square of \eqref{onesidedconsquares}, and the bottom square of \eqref{allsquares}.

We accomplish the first step by constructing a $\g$-action on the inner square of \eqref{onesidedconsquares}.  First some notation.  Let
$$
D_\infty = \langle \sigma, \tau ~|~ \sigma^2 =1, \sigma \tau \sigma = \tau^{-1}\rangle
$$
and $C_\infty = \langle \tau \rangle$.  Choose $s,t \in \g$ so that $p(s) = \sigma$ and $p(t) = \tau$.  Let $F = \ker p$ and $\go = p^{-1}C_\infty$.  Then $\go = F \rtimes C_\infty$.  Note $\go/F = \{t^i F : i \in \Z\}$.

We first extend the $\go$-action on the right hand square of \eqref{geomsquares} to a $\g$-action.  Indeed, consider the square of $\g$-spaces
\begin{equation} \label{gammasquare}
\xymatrix{\g/F \ar[r]^-\Phi \ar[d] & \g/\langle t^{-1}s, F\rangle  \ar[d]\\
M\ar[r]& P}
\end{equation}
Let $\Phi(gF) = g\langle t^{-1}s,F\rangle$, $\pi: \g/F \to \g/\langle s,F\rangle$ with $\pi(gF) = g \langle s,F\rangle$,   $M$ be the mapping cylinder $M(\pi) = ([0,1] \times \g/F) \amalg \g/\langle s,F \rangle)/(0,gF) \sim \pi(gF)$, and $P$ be the pushout of the rest of the diagram.   (The $\g$-space $P$ can be identified with the $\g$-space $\R$, where the action is given via $\g \to D_\infty = \text{Isom}(\Z) \hookrightarrow \text{Isom}( \R)$.)

A bijection of $\go$-squares from the right hand square of \eqref{geomsquares} to the restriction of 
\eqref{gammasquare} is provided by
\begin{align*}
\psi_0: \go/F & \to \g/\langle t^{-1}s,F \rangle \\
t^iF &\mapsto t^i\langle t^{-1}s,F\rangle \\
\psi_S : S^0 \times \go/F & \to \g/F\\
(1,t^iF) & \mapsto t^iF \\
(-1,t^iF) & \mapsto t^isF\\
\psi_D :  D^1 \times \go/F & \to M\\
\psi_D(x,t^iF) & = \begin{cases}
[x,t^iF] & x \geq 0\\
[-x,t^isF] & x \leq 0
\end{cases}
\end{align*}
and mapping using the pushout property on the lower right hand corner.  

Via this bijection there is a $\Gamma$-action  on the right hand square of \eqref{geomsquares} which extends the $\Gamma_0$-action.  Formulae for this action are determined by knowing how $s \in \g - \go$ acts on $F$, $\{\pm 1\} \times F$, and $D^1 \times F$.  
 Note $s = t(t^{-1}s)$, so $s\cdot F = \psi_0^{-1}(s\psi_0(F)) = \psi_0^{-1}\left(s\langle t^{-1}s,F\rangle\right) = tF$. 
 Likewise  $s\cdot (\pm 1, F) = (\mp 1, F)$ and $s \cdot (x,F) = (-x,F)$ for $x \in D^1$.

By Remark \ref{conjugation} we have constructed a $\Gamma$-action on the inner square of \eqref{onesidedconsquares}, which gives a $C_2 = \g/\go$-action after applying homotopy groups. The action of $s \in \Gamma$ gives the weak homotopy $C_2$-action on the inner square of \eqref{onesidedconsquares}.  We have thus completed our first step.  

For future reference, we now examine how these weak homotopy $C_2$-actions are induced by maps of groupoids.  We start with the upper right hand corner and find formulae for $C_2$-action on the target of the Yoneda homeomorphism
\begin{align*}
\bfK_R(\go/F) =\map_{\go}(-, \go/F)_+ \wedge_{\Or \go} \bfK_R(-) &\xrightarrow{\cong} \bfK_R(\overline{\go/F})\\
[f,y] & \mapsto \bfK_R(f)y.
\end{align*}

Using the notation of the proof of Lemma \ref{cong=id}, 
\begin{align*}
\bfK_{R}(c_s, s \cdot )[F,y] & = [tF, \bfK_R(\overline{c_s})y] \\
& =  [F, \bfK_R(\overline{R_t} \circ \overline{c_s})y]
\end{align*}
Thus the $C_2$-action is given by $\bfK_R(\overline{R_t} \circ \overline{c_s}) : \bfK_R(\overline{\go/F}) \to \bfK_R(\overline{\go/F})$.  Furthermore, there is a natural transformation $L_t$ between the maps of groupoids $\overline{c_{t^{-1}s}}, \overline{R_t} \circ \overline{c_s}: \overline{\go/F} \to \overline{\go/F}$, and thus the $C_2$-action is given on homotopy groups by conjugation by $t^{-1}s$.

Similarly there are Yoneda homeomorphisms
\begin{align*}
\bfK_R(S^0 \times \go/F) =\map_{\go}(-,S^0 \times \go/F)_+ \wedge_{\Or \go} \bfK_R(-) &\xrightarrow{\cong} \bfK_R(\overline{S^0 \times \go/F}) \\
&= S^0_+ \wedge \bfK_R(\overline{\go/F})\\
\bfK_R(D^1 \times \go/F) =\map_{\go}(-,D^1 \times  \go/F)_+ \wedge_{\Or \go} \bfK_R(-) &\xrightarrow{\cong} D^1_+ \wedge \bfK_R(\overline{\go/F})\\
\end{align*}
The weak homotopy $C_2$-action on $\bfK_R(\overline{S^0 \times \go/F})$ is induced by the self-map of groupoids $\overline{-1 \times c_s}$.  The weak homotopy $C_2$-actions on  $S^0_+ \wedge \bfK_R(\overline{\go/F})$ and $D^1_+ \wedge \bfK_R(\overline{\go/F})$ are given by $(-1) \wedge \bfK_R(\overline{c_s})$.

The second step is to define weak homotopy $C_2$-actions on the outer square of \eqref{onesidedconsquares} so that the diagonal homotopy equivalences in \eqref{onesidedconsquares} are  weak homotopy $C_2$-maps.  This involves defining  weak homotopy $C_2$-actions  on three vertices (the action on $\bfK_R(E_{\ff}\go)$ was already defined in step 1) and verifying that four non-identity maps are  weak homotopy $C_2$-maps.

The weak homotopy $C_2$-actions are given by $\bfK(c_{t^{-1}s})$ on $\bfK(RF)$, by the switch map composed with $\bfK(c_s) \vee \bfK(c_s)$ on $\bfK(RF) \vee \bfK(RF)$ and by $[x,y] \mapsto [-x, \bfK(c_s)]$ on $D^1_+ \wedge \bfK(RF)$.  These are weak homotopy $C_2$-actions since  $s^2, (t^{-1}s)^2 \in F$ and inner automorphisms induce maps homotopic to the identity on $K$-theory.  (Indeed, note that for $f \in F$, the morphism $f : F \to F$ gives a natural transformation between $\id, \overline{c_f} : \overline{F/F} \to \overline{F/F}$.  Thus $\bfK(c_f)$ is homotopic to the identity.)

Inspection shows that all maps commute with the $C_2$-actions except for the top horizontal map and the upper right diagonal maps in \eqref{onesidedconsquares}.  The top horizontal map is induced by the map of groupoids
$$
\id \amalg \overline{c_{t^{-1}}} : (\overline{F/F})_+ \amalg (\overline{F/F})_- \to \overline{F/F}.
$$
Note $\overline{c_{t^{-1}}} \circ \overline{c_s} = \overline{c_{t^{-1}s}} \circ \id : (\overline{F/F})_+ \to \overline{F/F}$, while $\id \circ \overline{c_s}, \overline{c_{t^{-1}s}} \circ \overline{c_{t^{-1}}} : (\overline{F/F})_- \to \overline{F/F}$ are related by the natural transformation $f : F \to F$ where $t^{-1}st^{-1} = fs$.  Thus the top horizontal map is a weak homotopy $C_2$-map.  Now we examine the diagonal map in the upper right induced by the inclusion $\inc : \overline{F/F} \to \overline{\go/F}$.  Here $\inc \circ  \overline{c_{t^{-1}s}}, (\overline{R_t} \circ \overline{c_s}) \circ \inc : \overline{F/F} \to \overline{\go/F}$ are related by the natural transformation $L_t$.

  We have now completed our second step and have constructed a $C_2$-action up to homotopy on the outer square of \eqref{onesidedconsquares}.

%Here is the straightforward step:
%\begin{align*}
%(\id \vee \bfK(c_R(t^{-1}))  \bfK(c_R(s)) \text{sw} & = \bfK(c_R(t^{-1}s)) \vee \bfK(c_R(s))\\
%&\simeq \bfK(c_R(t^{-1}s)(\id \vee \bfK(c_R(t^{-1}))
%\end{align*}

The third step is to define a $C_2$-action on the bottom square of \eqref{allsquares}, compatible with the homotopy $\bfH$ and the weak homotopy $C_2$-action constructed on the outer square of \eqref{onesidedconsquares}.  The key difficulty will be defining the $C_2$-action on the Nil-term.  We take a roundabout route.  We will first define a $\go$-action, extend it to a $\g$-action, note that $F$-acts homotopically trivially, and then restrict to $C_2 = \langle s , F \rangle/F$.

An element $g \in \go$ acts on $\nil(RF; t^{\pm 1}RF)$ by
$$
g_\cdot(P,p: P \to t^{\pm 1}P) = (c_{g*}P,    t^{\pm 1}c_{g*}(\ell_{f,P}) \circ c_{g*}(p):  c_{g*}P \to t^{\pm 1} c_{g*}P)
$$
where $f = g^{-1}t^{\mp 1}gt^{\pm 1} \in F$, and $\ell_{f,P}(P) : c_{f*}P \to P$ is the homomorphism
$$
\ell_{f,P}\left( \sum s_i \otimes x_i\right) = \sum  s_i f x_i.
$$
Note that $\ell_{f,-}$ is a natural transformation from $c_{g*}$ to $\id$.  In making sense of the composite we used  the identifications
$$
c_{g*}(t^{\pm 1}P) = c_{gt^{\pm 1}*}P  = c_{t^{\pm 1}gf*}P =  t^{\pm 1}c_{g*}(c_{f*}P).
$$

The $\go$-action on $\nil(RF; t^{\pm 1}RF)$ extends to a $\g$-action acts on
$$\nil(RF; tRF)  \amalg \nil(RF; t^{-1}RF)
$$
as follows.   For $g \in \g - \go$, define
by
$$
g_\cdot(P,p: P \to t^{\pm 1}P) = (c_{g*}P,    t^{\mp 1}c_{g*}(\ell_{f,P}) \circ c_{g*}(p): c_{g*}P \to t^{\mp 1} c_{g*}P)
$$
where $f = g^{-1}t^{\pm 1} gt^{\pm 1} \in F$.
In making sense of the composite we used  the identifications
$$
c_{g*}(t^{\pm 1}P) = c_{gt^{\pm 1}*}P  = c_{t^{\mp 1}gf*}P =  t^{\mp 1}c_{g*}(c_{f*}P).
$$

We next claim that if $g\in F$ then $g$ acts trivially by showing that there is an exact natural transformation from $g : \nil(RF; t^{\pm 1}RF) \to \nil(RF; t^{\pm 1}RF)$  to the identity.  In fact, it is given by the pair of maps $\ell_{g,P} : c_{g*}P \to P$ and $t^{\pm 1}\ell_{g,P} : t^{\pm 1}c_{g*}P \to t^{\pm 1}P$.

In particular $s^2 \in F$ acts homotopically trivially, so the action by $s \in \g$ gives the homotopy $C_2$-action.  Notice that $s$ switches the two Nil-terms.  To complete the homotopy $C_2$-action on the bottom square of \eqref{allsquares}, let $C_2$ act via the ring automorphism $c_s: RF \to RF$ on the lower left, by $c_{t^{-1}s}: RF \to RF$ on the upper right, and trivially on the lower left.  It is easy to see that the previously constructed map from the upper left hand corner of outer square of \eqref{onesidedconsquares} to the upper left hand corner of the 
bottom square of \eqref{allsquares} are compatible with the $C_2$-action, as well has the homotopy $\bfH$.  It follows that all maps in \eqref{onesidedconsquares} are weak homotopy $C_2$-maps.  

Thus we have a $C_2$-equivariant map between the Mayer-Vietoris sequences \eqref{exactseqs} which switches the Nil-terms.  The Lemma follows.
\end{proof}

%
%A map of $\go$-spaces $X\to Y$ gives a map $\bfK_{\%}(X) \to \bfK_{\%}(Y)$.  The bottom square \eqref{finsquares} is $\bfK_{\%}$ applied to bottom square \eqref{geomsquares}.  Hence there is a $\g$-action on the bottom square \eqref{finsquares}.  We next claim the $\g$-action on the homotopy groups of the corners of the bottom square \eqref{finsquares} is trivial when restricted to $\go \subset \g$.

%\end{proof}

%

%\begin{remark}
% Let $H$ be a subgroup of a group $G$.  Let $g \in N_G(H)$.  Then there is a map of left $G$-spaces
%$$
%g: G/H \to G/H; \qquad \gamma H \mapsto \gamma g H.
%$$
%(We used this observation earlier when we defined the map $b$ in \eqref{geomsquares}.)  One can show that this induces conjugation by $g^{-1}$ on $K$-theory: $c_{g^{-1}} : \bfK(RH) \to \bfK(RH)$.

%Now suppose $g$ satisfies the stronger
%property that $[g,G] \subset H$.  Then there is a natural transformation of functors from $\overline{\id} : \overline{G/H} \to \overline{G/H} $ to $\overline{g} : \overline{G/H} \to \overline{G/H} $, given by $\gamma H \to g \gamma H$.  Hence the map $\overline{g}$ induces the identity on the $K$-theory of the groupoid and $c_{g^{-1}} : \bfK(RH) \to \bfK(RH)$ is homotopic to the identity.
%\end{remark}

\section{Proof of Theorem \ref{cor1}}  \label{sec-proof-of-cor1}

The proof will require a sequence of lemmas, the most substantial of which is the following.

\begin{lemma} \label{spectra}
Let $\ff$ be a family of subgroups of $G$.
Let $\bfE : \Or G \to \Spectra $ be an $\Or G\text{-spectrum}$.  Then there is a homotopy cofiber sequence of $\Or G\text{-spectra}$
$$
\bfE_{\ff} \to \bfE \to \bfE/\bfE_{\ff}
$$
satisfying the following properties.
\begin{enumerate}
\item \label{coeff}
For any subgroup $H$ of $G$, one can identify the change of spectra map
$$
H_n^G(G/H; \bfE_{\ff}) \to H_n^G(G/H; \bfE)
$$
with  the change of space map
$$
H^H_n(E_{\ff \cap H}H;\bfE) \to H^H_n(\pt;\bfE) = \pi_n\bfE(H/H).
$$
Here $\bfE : \Or H \ \to \Spectra$ is defined by restriction: $\bfE(H/K) = \bfE(G/K)$.  
\item \label{cspectra}  For any family $\famh$ contained in $\ff$, the map $H^G_n(E_{\famh}G; \bfE_{\ff}) \to H^G_n(E_{\famh}G; \bfE)$ is an isomorphism.
\item  \label{fcoeff}
For any family $\famg$ containing $\ff$, the map
$
H^G_n(E_{\ff}G; \bfE_{\ff}) \to H^G_n(E_{\famg}G; \bfE_{\ff})
$
is an isomorphism.
\end{enumerate}
\end{lemma}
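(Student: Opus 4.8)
The plan is to construct $\bfE_{\ff}$ by hand and to deduce the three properties from a single ``Fubini'' identity together with elementary fixed-point computations. Recall that $\bfE$ is extended to all $G$-CW-complexes by $\bfE(Z) = \map_G(-,Z)_+ \wedge_{\Or G} \bfE(-)$, and that by Yoneda this agrees with the original $\bfE$ on orbits. I would define an $\Or G$-spectrum by
\[
\bfE_{\ff}(G/H) \;=\; \bfE\bigl(G/H \times E_{\ff}G\bigr);
\]
this is functorial in $G/H$ because a $G$-map $f\colon G/H \to G/H'$ induces $f \times \id \colon G/H \times E_{\ff}G \to G/H' \times E_{\ff}G$. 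The projection $E_{\ff}G \to \pt$ induces a map of $\Or G$-spectra $\bfE_{\ff} \to \bfE$, and I would let $\bfE/\bfE_{\ff}$ be its objectwise mapping cone (functorial in the arrow, hence in $G/H$). Then $\bfE_{\ff} \to \bfE \to \bfE/\bfE_{\ff}$ is a homotopy cofiber sequence of $\Or G$-spectra by construction.

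The main step is the following key identity: for every $G$-CW-complex $X$ there is an isomorphism $H_n^G(X;\bfE_{\ff}) \cong H_n^G(X \times E_{\ff}G; \bfE)$, natural in $X$, under which the change-of-spectra map $H_n^G(X;\bfE_{\ff}) \to H_n^G(X;\bfE)$ corresponds to the map induced by $\id_X \times (E_{\ff}G \to \pt) \colon X \times E_{\ff}G \to X$. This is purely formal: $\map_G(-,X)_+ \wedge_{\Or G} \bfE_{\ff}(-)$ unwinds to a double balanced smash product (a double coend), and applying the co-Yoneda lemma $\map_G(-,X) \cong \int^{G/K} \map_G(G/K,X) \times \map_G(-,G/K)$ together with the Fubini theorem for coends collapses it to $\map_G(-, X \times E_{\ff}G)_+ \wedge_{\Or G} \bfE(-) = \bfE(X \times E_{\ff}G)$; freeness of the $\Or G$-CW-complex $\map_G(-,X)$ makes the manipulation homotopy-invariant, so it passes to homotopy groups. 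I expect this is the short proof the referee had in mind, and it is the only substantive point; parts (i)--(iii) then follow by bookkeeping with classifying spaces and fixed points.

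For (i), take $X = G/H$: the $G$-homeomorphism $G/H \times E_{\ff}G \cong G \times_H \res_H E_{\ff}G$, $(g,x) \mapsto (gH,gx)$, together with the standard induction isomorphism $H_n^G(G \times_H Y; \bfE) \cong H_n^H(Y; \bfE)$ for $\Or$-spectra (with $\bfE$ restricted to $\Or H$ as in the statement) identifies $H_n^G(G/H;\bfE_{\ff})$ with $H_n^H(\res_H E_{\ff}G; \bfE)$; since $(\res_H E_{\ff}G)^K = (E_{\ff}G)^K$ is contractible exactly for $K \in \ff \cap H$ and empty otherwise, $\res_H E_{\ff}G$ is a model for $E_{\ff \cap H}H$, and tracking the key identity the change-of-spectra map becomes the assembly map. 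For (ii), take $X = E_{\famh}G$ with $\famh \subseteq \ff$: the map $\id_X \times (E_{\ff}G \to \pt) \colon E_{\famh}G \times E_{\ff}G \to E_{\famh}G$ is a $G$-homotopy equivalence, because on $K$-fixed points it is a map of contractible spaces when $K \in \famh$ (here one uses $\famh \subseteq \ff$) and of empty spaces when $K \notin \famh$; by the key identity it realizes the map in (ii). For (iii), let $\iota \colon E_{\ff}G \to E_{\famg}G$ be the canonical map ($\ff \subseteq \famg$); then $\iota \times \id_{E_{\ff}G} \colon E_{\ff}G \times E_{\ff}G \to E_{\famg}G \times E_{\ff}G$ is a $G$-homotopy equivalence, since on $K$-fixed points it is a map of contractible spaces for $K \in \ff$ and of empty spaces for $K \notin \ff$ (using $\ff \subseteq \famg$ on the first factor of the target), and by the key identity it realizes the map in (iii).

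The hard part, such as it is, is thus establishing the key identity of the second paragraph with the required naturality; the rest is manipulation of models for classifying spaces. If one prefers to avoid the coend bookkeeping, (ii) can alternatively be deduced from (i): when $H \in \ff$ the space $E_{\ff \cap H}H$ has a one-point model, so $\bfE_{\ff} \to \bfE$ is an objectwise weak equivalence on all $G/H$ with $H \in \ff$, and the usual induction over the skeleta of $E_{\famh}G$ (as in the proof of Lemma~\ref{h-inv-sp}) then gives (ii); part (iii), however, still seems to want the key identity.
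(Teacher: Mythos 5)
Your proposal is correct and takes essentially the same approach as the paper: define $\bfE_{\ff}(G/H) = \bfE(G/H \times E_{\ff}G)$, establish the identity $H_n^G(X;\bfE_{\ff}) \cong H_n^G(X \times E_{\ff}G;\bfE)$, and then deduce (i) via the induction isomorphism with $\res_H E_{\ff}G$ as a model for $E_{\ff\cap H}H$, and (ii)--(iii) via fixed-point checks showing the relevant projections of products of classifying spaces are $G$-homotopy equivalences. The only difference is that you derive the key identity from scratch via co-Yoneda and coend Fubini, whereas the paper cites it as Lemma~2.3 of Bartels; this is the referee's ``simple proof'' alluded to in the acknowledgments.
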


\begin{proof}  [Proof of Lemma \ref{spectra}] Define the $\Or G$-spectrum $\bfE_{\ff}$ by
$$\bfE_{\ff}(G/H) = \map_G(-, G/H \times E_{\ff}G)_+ \wedge_{\Or G} \bfE(-).$$ 
Projection $G/H \times  E_{\ff}G \to G/H$ is a  $G$-map  and hence gives a map of $\Or G$-spectra $\bfE_{\ff} \to \bfE$; define $ \bfE_{\ff}/\bfE$ to be the homotopy cofiber.

The proof of (i) will follow from the following commutative diagram,
where the indicated maps are isomorphisms.
\begin{equation} \label{lemma_diagram}
\xymatrix{  
H_n^G(G/H; \bfE_{\ff} ) \ar[rdd] & \\
H^G_n(G/H \times E_{\ff}G; \bfE) \ar[rd] \ar[u]_{\cong}   & \\
H^G_n(G \times_H \res_H E_{\ff} G ; \bfE) \ar[r] \ar[u]_{\cong}  &  H^G_n(G/H; \bfE) \\
H^H_n(\res_H E_{\ff}G;\bfE)  \ar[r] \ar[u]_{\cong} & H^H_n(\pt; \bfE) \ar[u]_{\cong} 
}
\end{equation}

%The terms in the above diagram should be clear to the reader, as well
%as the non-vertical maps.  For example, the middle non-vertical maps
%are given by projection on the first factor.  

The first horizontal map is induced from the map of spectra
$\bfE_{\ff} \to \bfE$, the other horizontal maps are induced
from the projection $\res E_{\ff} G = E_{\ff}G \to \pt$.
The top vertical isomorphism comes from a lemma of Bartels \cite[Lemma 2.3]{Bartels} which states that for any $G$-space $X$ there is an isomorphism
\begin{equation} \label{bartelsiso}
 H^G_*(X \times E_{\ff}G; \bfE)  \xrightarrow{\cong}  H^G_*(X; \bfE_{\ff}). 
\end{equation}
To see that the upper triangle commutes, it is helpful to write the homeomorphism of spectra underlying Bartel's isomorphism:
\begin{multline*}
 \map_G(?, X \times E_{\ff}G)_+ \wedge_{\Or G} \bfE(?)  \xrightarrow{\cong}
\map_G(?, X)_+ \wedge_{\Or G} \bfE_{\ff}(?)=  \\
  \map_G(?, X)_+ \wedge_{\Or G} \left(\map_G(??,? \times E_{\ff}G)_+ 
\wedge_{\Or G} \bfE(??) \right)
\end{multline*}
with formula $[\alpha,x] \mapsto [\alpha_1, [(\id_?, \alpha_2),x]]$ where $\alpha = (\alpha_1, \alpha_2) \colon ? \to X \times E_{\ff}G$ and $x\in E_n(?)$.

For any $G$-space $X$ there is a  commutative triangle of $G$-spaces
$$
\xymatrix{
G/H \times X \ar[rd] & \\
G \times_H \res_H X \ar[r] \ar[u]_{\cong} & G/H
}
$$
with the vertical homeomorphism given by $[g,x] \mapsto (gH,gx)$. The
middle triangle in \eqref{lemma_diagram} is induced from this triangle.

Next we discuss the bottom square of diagram \eqref{lemma_diagram}.
One of the axioms in \cite{Lueck-Chern} of an equivariant (as opposed
to $G$-equivariant) homology theory $\calh$ is the existence of
induction isomorphisms 
$\calh_*^H(X) \xrightarrow{\cong} \calh_*^G(G \times_H X)$ that are
natural in $X$.   
Proposition~157 of \cite{Lueck-Reich(survey)} (see also \cite{Sauer}) shows that this axiom is satisfied for $H^G_*(-; \bfE)$.

For the proof of (ii), note that for any family $\famh \subset \ff$, the spaces $E_{\famh} G \times E_{\ff} G $ and $E_{\famh} G $ are $G$-homotopy equivalent, hence 
\begin{align*}
H^G_n(E_{\famh}G; \bfE_{\ff}) & \cong  H^G_n(E_{\famh}G\times
E_{\ff}G; \bfE)  \quad \mbox{by \eqref{bartelsiso}}\\
  & \cong  H^G_n(E_{\famh}G; \bfE).
\end{align*}
For the proof of (iii), note for any family $\famg \supset \ff$, the spaces $E_{\ff}G \times E_{\ff}G$, $E_{\ff}G$, and $E_{\famg}G \times E_{\ff}G$ are $G$-homotopy equivalent, hence
\begin{align*}
H^G_n(E_{\ff}G; \bfE_{\ff}) &  \cong  H^G_n(E_{\ff}G \times E_{\ff}G; \bfE)  \quad \mbox{by \eqref{bartelsiso}}\\
 & \cong H^G_n(E_{\famg}G \times E_{\ff}G ; \bfE)\\
 & \cong  H^G_n(E_{\famg}G; \bfE_{\ff}) \quad \mbox{again by \eqref{bartelsiso}}.
\end{align*}
\end{proof}

Lemma \ref{spectra} shows that for $\ff \subset \gggg$ 
the change of space map
$$
H^G_n(E_{\ff}G; \bfE) \to H^G_n(E_{\famg}G; \bfE)
$$
can be identified with the change of spectra map
$$
H^G_n(E_{\famg}G; \bfE_{\ff}) \to H^G_n(E_{\famg}G; \bfE)
$$
since the domain of each is isomorphic to $H^G_n(E_{\ff}G;\bfE_{\ff})$.

In preparation for the proof of Theorem~\ref{cor1} we state two more lemmas.
\begin{lemma} \label{lem-join}
If $\famg$ and $\famh$ are families of subgroups of $G$ then there is a homotopy pushout square of
$G$-spaces
\[
\xymatrix{  E_{\famg \cap \famh} G \ar[d] \ar[r] & E_{\famg} G \ar[d] \\
E_{\famh} G \ar[r] & E_{\famg \cup \famh} G
}
\]
\end{lemma}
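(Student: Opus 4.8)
The plan is to realize the asserted homotopy pushout concretely as a double mapping cylinder and to check that it has the fixed-point behaviour characterizing $E_{\famg\cup\famh}G$. Both the intersection $\famg\cap\famh$ and the union $\famg\cup\famh$ are again families, and since $\famg\cap\famh\subseteq\famg$ and $\famg\cap\famh\subseteq\famh$ there are $G$-maps $E_{\famg\cap\famh}G\to E_{\famg}G$ and $E_{\famg\cap\famh}G\to E_{\famh}G$, unique up to $G$-homotopy. After replacing them by cellular maps, form the $G$-CW-complex
\[
Z=\hocolim\bigl(E_{\famh}G\leftarrow E_{\famg\cap\famh}G\to E_{\famg}G\bigr),
\]
the double mapping cylinder. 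By construction the square with $Z$ in the lower right corner and the canonical maps $E_{\famg}G\to Z$, $E_{\famh}G\to Z$ is a homotopy pushout square. Since the map $E_{\famg}G\to E_{\famg\cup\famh}G$ (and likewise for $\famh$) is anyway unique up to $G$-homotopy, it will be enough to produce \emph{any} $G$-homotopy equivalence $Z\simeq E_{\famg\cup\famh}G$ and then transport the square; and by the equivariant Whitehead theorem it suffices to check that $Z^H$ is contractible for $H\in\famg\cup\famh$ and empty otherwise.

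The key point is that passing to $H$-fixed points commutes with the double mapping cylinder: $(-)^H$ preserves pushouts along closed $G$-cofibrations and commutes with $-\times[0,1]$ and with disjoint unions, so $Z^H$ is the double mapping cylinder of $(E_{\famh}G)^H\leftarrow(E_{\famg\cap\famh}G)^H\to(E_{\famg}G)^H$. Recall that $(E_{\ff}G)^H$ is contractible when $H\in\ff$ and empty when $H\notin\ff$. A four-way case distinction according to whether $H$ lies in $\famg$ and/or in $\famh$ then finishes the argument: if $H\notin\famg\cup\famh$ all three spaces are empty and $Z^H=\emptyset$; if $H\in\famg\cap\famh$ the homotopy pushout of a diagram of three contractible spaces is contractible; and if $H$ lies in exactly one of $\famg,\famh$ then one of the two outer spaces is contractible and the other two are empty, so $Z^H$ is the homotopy pushout of $\emptyset\leftarrow\emptyset\to\ast$, which is contractible. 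Thus $Z^H$ is contractible precisely when $H\in\famg\cup\famh$ and empty otherwise, which is exactly the fixed-point data of $E_{\famg\cup\famh}G$.

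This is a routine verification; the only steps needing any care are the interchange of $(-)^H$ with the homotopy colimit and the bookkeeping in the degenerate cases where some fixed-point sets are empty, both handled cleanly once one commits to the explicit double mapping cylinder model and recalls that $E_{\ff}G$ is characterized up to $G$-homotopy equivalence by its fixed points. Equivalently, one can phrase the lemma in terms of joins: $E_{\famg}G\times E_{\famh}G$ is a model for $E_{\famg\cap\famh}G$ (its $H$-fixed points are $(E_{\famg}G)^H\times(E_{\famh}G)^H$), and the join $E_{\famg}G\ast E_{\famh}G$ is precisely the homotopy pushout of $E_{\famg}G\leftarrow E_{\famg}G\times E_{\famh}G\to E_{\famh}G$; so the lemma identifies this join with $E_{\famg\cup\famh}G$, which is the viewpoint underlying the join model in Example~\ref{ex-join-model}.
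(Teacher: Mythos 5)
Your proof is correct and is exactly the argument the paper has in mind: the paper's one-line proof says to use the double mapping cylinder model and verify the characterizing fixed-point property of $E_{\famg\cup\famh}G$, and you have simply written out the four-case fixed-point verification (together with the compatibility of $(-)^H$ with the double mapping cylinder) in full. The closing remark tying this to the join model is also the content of the paper's Remark~\ref{rem-join}.
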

\begin{proof} Use the double mapping cylinder model for the homotopy pushout and verify that it satisfies
the characterizing property for $E_{\famg \cup \famh} G$.
\end{proof}
\begin{remark} \label{rem-join}
Note that $E_{\famh}G \xleftarrow{p_1} E_{\famh}G \times E_{\famg} G \xrightarrow{p_2} E_{\famg}G$
(where $p_1$ and $p_2$ are the obvious projections) is a model for
$E_{\famh}G \leftarrow E_{\famg \cap \famh}G \rightarrow E_{\famg}G$ and that the homotopy pushout of
$X \xleftarrow{p_1} X \times Y \xrightarrow{p_2} Y$ is the join $X \ast Y$.
This gives a conceptional explanation of the join-model $S^{\infty} \ast \IR$ of
$E_{\fbc} D_\infty = E_{\sub(C_\infty) \cup \fin} D_\infty$ discussed in Example~\ref{ex-join-model}.
\end{remark}

%The following lemma should be true when E(G/N) is replace by a G-space X with constant isotropy N

\begin{lemma} \label{lem-sub}
Suppose $N \triangleleft G$ is a normal subgroup of $G$ and $\bfE$ is an $\Or G$-spectrum. Then
there is a weak equivalence
\[
E (G/N)_+ \sma_{G/N} \bfE ( G/N ) \xrightarrow{\sim}
E_{\sub (N)}G_+ \sma_{\Or G} \bfE .
\]
Here on the left hand side $G/N$ acts on $\bfE ( G / N)$ via the identification
$G/N = \aut_{\Or G} ( G/N)$.
\end{lemma}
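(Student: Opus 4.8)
The plan is to realize $E_{\sub(N)}G$ as the pullback of a universal free $G/N$-space and then to compare the two balanced products by a cellular induction. Let $q\colon G\to G/N$ be the quotient homomorphism and let $E(G/N)$ be a free contractible $G/N$-CW-complex. The first step is to note that the $G$-CW-complex $q^{*}E(G/N)$ — that is, $E(G/N)$ regarded as a $G$-CW-complex via $q$, so that each $G/N$-cell $G/N\times D^{n}$ becomes the $G$-cell $G/N\times D^{n}$ — is a model for $E_{\sub(N)}G$: for any subgroup $H\subset G$ we have $(q^{*}E(G/N))^{H}=E(G/N)^{q(H)}$, which is contractible when $q(H)=\{1\}$, i.e.\ $H\subset N$, and empty otherwise since $E(G/N)$ is a free $G/N$-CW-complex. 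As $E_{\sub(N)}G$ is only defined up to $G$-homotopy equivalence, it suffices to produce the asserted weak equivalence with $E_{\sub(N)}G$ replaced by $q^{*}E(G/N)$.

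The second step is to construct, naturally in the free $G/N$-CW-complex $\bar Y$, a map of spectra
\[
\Theta_{\bar Y}\colon \bar Y_{+}\sma_{G/N}\bfE(G/N)\longrightarrow \map_{G}(-,q^{*}\bar Y)_{+}\sma_{\Or G}\bfE(-).
\]
Since $N$ acts trivially on $q^{*}\bar Y$ one has $\map_{G}(G/N,q^{*}\bar Y)=(q^{*}\bar Y)^{N}=\bar Y$, and, under the canonical identification $G/N\cong\aut_{\Or G}(G/N)$, the $\aut_{\Or G}(G/N)$-action on $\bar Y$ coming from the contravariant $\Or G$-space $\map_{G}(-,q^{*}\bar Y)$ agrees with the prescribed $G/N$-action on $\bar Y$. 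Hence the canonical map from $\map_{G}(G/N,q^{*}\bar Y)_{+}\sma\bfE(G/N)$ to the coend $\map_{G}(-,q^{*}\bar Y)_{+}\sma_{\Or G}\bfE(-)$ coequalizes the two $\aut_{\Or G}(G/N)$-actions and so factors through $\bar Y_{+}\sma_{G/N}\bfE(G/N)$; that factorization is $\Theta_{\bar Y}$. For $\bar Y=G/N$ the source is $\bfE(G/N)$, and since $q^{*}(G/N)$ is just the $G$-orbit $G/N$ the target is $\map_{G}(-,G/N)_{+}\sma_{\Or G}\bfE(-)\simeq\bfE(G/N)$ by the co-Yoneda identification (the same one used ``by Yoneda's Lemma'' for $\bfK_{R}(G/H)$ above); one checks $\Theta_{G/N}$ is this weak equivalence. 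Since both sides commute with coproducts, $\Theta$ is then a weak equivalence on any discrete free $G/N$-set, and $\Theta_{(G/N)\times D^{n}}$ and $\Theta_{(G/N)\times S^{n-1}}$, being $D^{n}_{+}$ resp.\ $S^{n-1}_{+}$ smashed with $\Theta_{G/N}$, are weak equivalences.

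The third step is the standard induction over the skeleta of $\bar Y$ (compare \cite[Theorem~3.11]{DL98}). Viewed as functors of the free $G/N$-CW-complex $\bar Y$, both sides of $\Theta$ send a pushout along a subcomplex inclusion to a homotopy cocartesian square of spectra, and send the skeletal filtration to a homotopy colimit: on the left because $(-)_{+}\sma_{G/N}\bfE(G/N)$ is a left adjoint carrying $G/N$-cofibrations to cofibrations of spectra; on the right because $\bar Y\mapsto q^{*}\bar Y$ preserves pushouts and subcomplex inclusions, $\map_{G}(-,-)$ carries pushouts of $G$-CW-complexes to pushouts of $\Or G$-spaces, and $(-)_{+}\sma_{\Or G}\bfE$ then carries these to homotopy cocartesian squares by \cite[Lemma~6.1]{DL98}. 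By the gluing lemma and passage to the colimit, $\Theta_{\bar Y}$ is a weak equivalence for every free $G/N$-CW-complex $\bar Y$; taking $\bar Y=E(G/N)$ and composing with the $G$-homotopy equivalence of the first step yields the lemma.

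The step I expect to require the most care is the construction of $\Theta_{\bar Y}$ in the second paragraph, specifically the verification that the $\aut_{\Or G}(G/N)$-action on $\map_{G}(G/N,q^{*}\bar Y)=\bar Y$ really is the prescribed $G/N$-action — pure handedness bookkeeping around the anti-isomorphism $\aut_{\Or G}(G/N)\cong(G/N)^{\op}$, $R_{\gamma}\leftrightarrow\gamma N$, but it is exactly what makes the source of $\Theta$ literally the spectrum named in the statement. Conceptually, one can package the whole argument by observing that $\map_{G}(-,q^{*}\bar Y)$ is the left Kan extension along the full inclusion $\{G/N\}\hookrightarrow\Or G$ of the $\aut_{\Or G}(G/N)$-space $\bar Y$ (an identity itself proved by the skeletal induction above, with base case $\bar Y=G/N$), and that forming the balanced product over $\Or G$ against $\bfE$ of such a left Kan extension is, by a Fubini argument for coends, the same as forming the balanced product over $\aut_{\Or G}(G/N)$ against $\bfE(G/N)$.
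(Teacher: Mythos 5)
Your proof is correct, but it takes a genuinely different route from the paper's. The paper's argument is a short cofinality computation: let $j_H\colon\aut_{\Or G}(G/N)\hookrightarrow\Or(G,\sub(N))$ be the inclusion of the full subcategory on the single object $G/N$; then both sides of the asserted equivalence are $\hocolim j_H^{\ast}\bfE$ and $\hocolim\bfE$ respectively, and the map between them is a weak equivalence because $j_H$ is right-cofinal --- indeed every overcategory $G/H\downarrow j_H$ (for $H\subset N$) has the projection $G/H\to G/N$, $gH\mapsto gN$, as a final object, since both $\mor(G/H,G/N)$ and $\mor(G/N,G/N)$ identify with $G/N$ --- so the paper can appeal directly to Hollender--Vogt. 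Your argument instead realizes $E_{\sub(N)}G$ as $q^{\ast}E(G/N)$ and induces over the skeleta of $E(G/N)$, comparing two excisive functors of a free $G/N$-CW-complex $\bar Y$ that agree on orbits; the bookkeeping you flag (that the right $\aut_{\Or G}(G/N)$-action by precomposition on $\map_G(G/N,q^{\ast}\bar Y)\cong\bar Y$ matches the prescribed left $G/N$-action under the anti-isomorphism $\gamma N\leftrightarrow R_\gamma$) does check out and is exactly what makes the source of $\Theta$ the named spectrum. The trade-off is length for self-containment: your cellular induction reuses only machinery already in the paper (\cite[Theorem~3.11, Lemma~6.1]{DL98}) and never invokes general cofinality theorems, whereas the paper's argument is a four-line appeal to a cofinality criterion. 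Your closing ``conceptual packaging'' --- that $\map_G(-,q^{\ast}\bar Y)$ is the left Kan extension of $\bar Y$ along $j_H$, after which a Fubini for coends finishes the job --- is, unwound, the coend-level restatement of the same cofinality fact the paper proves directly, so you have recovered the paper's argument as a corollary of your own.
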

\begin{proof}
Note that the full subcategory of $\Or (G, \sub(N))$ on the single object $G/N$ is $\aut_{\Or G} ( G/N )$.  Let
$j_H \colon \aut_{\Or G} ( G/N ) \to \Or (G , \sub (N))$ denote the inclusion functor.
The map in question can be interpreted as the natural map
\[
\hocolim_{\aut_{\Or G}(G/N)} j_H^{\ast} \bfE \to \hocolim_{\Or (G , \sub(N)} \bfE.
\]
For every object $G/H$ in $\Or(G , \sub(N))$ the overcategory $G/H \downarrow j_H$ has the map
$G/H \to G/N$, $gH \mapsto gN$ as a final object, because both $\mor(G/H , G/N)$ and $\mor(G/N, G/N)$ can
be identified with $G/N$. Hence $j_H$ is right-cofinal and the map is a weak equivalence,
compare \cite[Proposition~4.4]{Hollender-Vogt}.
\end{proof}

\begin{remark}
Note that a model for $E_{\sub(N)} G$ is given by
$E(G/N)$ considered as a $G$-space via the projection $G \to G/N$.
\end{remark}

\begin{proof}[Proof of Theorem~\ref{cor1}]
Recall the notation concerning the groups from the introduction:
\[
p \colon \g = G_1 \ast_{F} G_2 \to D_\infty = C_2 \ast C_2,\  \go= p^{-1} (C_\infty), \  \text{ and } \  \g / \go \cong C_2.  
\]
%For the families $\sub( C_{\infty})$, $\fin$, $\fbc$ of subgroups of
%$C_{\infty}$ we have $\sub(C_{\infty}) \cap \fin = 1$ and
%$\sub(C_{\infty}) \cup \fin = \fbc$. 
%Applying $p^{\ast}$ yields 
%\[
%\sub \Gamma_0 \cap \p^{\ast} \fin = 
%\]

Furthermore we set $\ff = p^*\fin$. Note that $\ff \cup \sub( \g_0 ) = p^{\ast} \fbc$.
We will abbreviate $\bfK_R$ by $\bfK$.  The proof of Theorem~\ref{cor1} consists in the following sequence of isomorphisms.  (We put an arrow on each isomorphism, indicating the easier direction to define.)
\begin{align*}
\widetilde{\WNil}_{n-1}   (RF; R[G_1-F] , R[G_2 -F] ) &\\
 \xleftarrow{\cong} H_n^{\g} ( E_{\all} \g , E_{\ff} \g ; \bfK )   & \quad \text{by Lemma~\ref{dlnil}\ref{dlnil2}} \\
 \xleftarrow{\cong}  H_n^{\g} ( E_{p^{\ast} \fbc} \g , E_{\ff} \g ; \bfK)  \label{eq2}  & \quad \text{by Theorem~\ref{thm-main}}\\
  \xrightarrow{\cong}  H_n^{\g} ( E_{p^{\ast} \fbc} \g , E_{\ff} \g ; \bfK/\bfK_{\ff})    & \quad \text{by Lemma~\ref{spectra}\ref{fcoeff}}\\
 \xleftarrow{\cong}  H_n^{\g} ( E_{p^{\ast} \fbc } \g ; \bfK/\bfK_{\ff} )&  \quad \text{by Lemma~\ref{spectra}\ref{cspectra}} \\
 \xleftarrow{\cong}  H_n^{\g} ( E_{\sub \go} \g ;\bfK/\bfK_{\ff}) & \quad \text{by Lemma~\ref{lem-join} with $\famg = \sub(\go)$,}\\
 & \quad \text{$\famh = \ff$ and Lemma~\ref{spectra}\ref{cspectra}}\\
 \xleftarrow{\cong}  H_n^{C_2} ( EC_2 ; (\bfK/\bfK_{\ff}) ( \g / \go))  & \quad \text{by Lemma~\ref{lem-sub}}\\
 \cong  H_0^{C_2} ( EC_2 ; \pi_n ( (\bfK/\bfK_{\ff}) ( \g / \go )) )   & \quad \text{see below}\\
 \cong
\left( \widetilde{\Nil}_{n-1} ( RF ; tRF )
\oplus \widetilde{\Nil}_{n-1} ( RF ; t^{-1} RF) \right)_{C_2}   & \quad \text{see below}\\
= \widetilde{\Nil}_{n-1} ( RF ; t RF ) &
\end{align*}

% Because
%\[
% \ff = p^*\fin , \  \sub( \go) = p^{\ast} \sub(C_\infty) ,  \text{ and } \ p^{\ast} \fbc =  p^{\ast} \fin \cup p^{\ast} \sub( C_\infty)
%\]
%Lemma~\ref{lem-join} yields a homotopy pushout square of $\g$-spaces
%\[
%\xymatrix{  E_{\ff \cap \go} \g \ar[d] \ar[r] & E_{\sub( \go )} \g \ar[d] \\
%E_{\ff} \g\ar[r] & E_{p^{\ast} \fbc} \g.
%}
%\]
%Using  Lemma~\ref{spectra}\ref{cspectra} one sees that applying $H^\g_n ( - ; \bfK/\bfK_{\ff} )$ to the spaces on the left of this square yields zero. This implies
%\eqref{eq4}. The isomorphism \eqref{eq5} follows from Lemma~\ref{lem-sub}. Here we use the notation
%\[
%H_n^{C_2} ( EC_2 ; (\bfK/\bfK_{\ff}) (\g / \go) ) = \pi_n ( EC_2 {}_+ \sma_{C_2}(\bfK/\bfK_{\ff})(\g / \go ) ).
%\]
There is an Atiyah-Hirzebruch spectral sequence (derived from a skeletal filtration of $EC_2$) with 
\[
E^2_{p,q} = H_p^{C_2} ( EC_2 ; \pi_q ( (\bfK/\bfK_{\ff})( \g / \go )) )
\Rightarrow H_{p+q}^{C_2} ( EC_2 ; (\bfK/\bfK_{\ff}) (\g / \go) ).
\]
By Lemma \ref{spectra}\ref{coeff},
$$
\pi_q ((\bfK/\bfK_{\ff}) ( \g / \go ) ) \cong H_q^{\go}(E_{\all}\go, E_{{\ff}_0}\go;\bfK).
$$
Here we write $\ff_0 = \ff \cap \Gamma_0$.
By Lemma \ref{equivariant},
\[
 H_q^{\go}(E_{\all}\go, E_{{\ff}_0}\go;\bfK) \cong \widetilde{\Nil}_{q-1} (RF ; tRF) \oplus \widetilde{\Nil}_{q-1} (RF ; t^{-1} RF).
\]
Furthermore the $C_2$-action
interchanges the two summands. Hence $E^2_{p,q} = 0$ for $p > 0$, the spectral sequence collapses, and $E^2_{0,q}$ is given by the $C_2$-coinvariants. %This yields the isomorphisms \eqref{eq6}, \eqref{eq7} and \eqref{eq8}.
\end{proof}

\end{document}